\documentclass[11pt,a4paper]{amsart}

\usepackage{geometry}
\usepackage{amsmath,amssymb,amsthm}
\usepackage{mathtools}
\usepackage{enumitem}
\usepackage{hyperref}
\usepackage{microtype}

\usepackage[utf8]{inputenc}
\usepackage[english]{babel}
\usepackage{amsmath,amssymb,amsthm}
\usepackage{geometry}
\usepackage{hyperref}

\hypersetup{
    colorlinks=true,
    linkcolor=blue,
    citecolor=blue,
    urlcolor=blue
}

\newtheorem{theorem}{Theorem}[section]

\newtheorem{proposition}[theorem]{Proposition}
\newtheorem{corollary}[theorem]{Corollary}

\theoremstyle{definition}
\newtheorem{definition}[theorem]{Definition}
\newtheorem{remark}[theorem]{Remark}

\newcommand{\R}{\mathbb{R}}
\newcommand{\Vol}{\operatorname{Vol}}
\newcommand{\Vrel}{V_{\operatorname{rel}}}
\newcommand{\E}{\mathbb{E}}

\newcommand{\erre}{\mathbb{R}}
\newcommand{\ene}{N}

\title{\textbf{Measure Concentration Around Minimal Hypersurfaces}}

\author{Vicent Gimeno i Garcia}
\address{Department of Mathematics, Universitat Jaume I-IMAC, E-12071, Castell\'{o}, Spain}
\email{gimenov@mat.uji.es}

\author{Vicente Palmer}
\address{Department of Mathematics, Universitat Jaume I-INIT, E-12071, Castell\'{o}, Spain}
\email{palmer@mat.uji.es}

\thanks{Research partially supported by the Research Program of University Jaume I Project GACUJIMB/2025/11, and by the project AICO/2023/035 funded by Conselleria de Educaci\'o, Cultura, Universitats i Ocupaci\'o-Generalitat Valenciana}

\subjclass[2020]{Primary 58J90, 58J65,  53C42}

\keywords{Concentration of measure, isometric immersion, mean curvature, minimal submanifold, tube, focal distance}

\date{\today}

\begin{document}

\maketitle

\begin{abstract}
The classical measure concentration phenomenon on the standard sphere $\mathbb{S}^n$ asserts that the volume of any tubular neighborhood of a totally geodesic equator $\mathbb{S}^{n-1}$ concentrates almost the entire volume of $\mathbb{S}^n$ as the dimension $n$ grows to infinity. In this paper, we extend this \lq\lq Fat Equator Effect'' to arbitrary closed minimal hypersurfaces $\Sigma^{n-1} \subset \mathbb{S}^n$. By combining the analytic framework of Brownian motion and its Mean Exit Time (MET) function with the geometric tool of Schwarz symmetrization, we establish that minimality alone is sufficient to guarantee measure concentration around $\Sigma^{n-1}$ in high dimensions.
\end{abstract}
\bigskip
\bigskip



\section{Introduction}

The classical Measure Concentration Theorem on the unit sphere $\mathbb{S}^n \subset \R^{n+1}$ is a foundational result in geometric analysis and asymptotic convex geometry.  Formulated by V. Milman and G. Schechtman in \cite{MilmanSchechtman2001} as a direct consequence of L\'evy\'s Isoperimetric Inequality, this result asserts that if a domain $A \subset \mathbb{S}^n$ occupies at least half of the total volume of the sphere, its metric enlargements will cover almost the entire volume of the sphere when the dimension grows to infinity.

Specifically, given a set $A \subset \mathbb{S}^n$, its $\epsilon$-fattening (or $\epsilon$-enlargement) is defined as
\begin{equation}
    A_\epsilon := \left\{ x \in \mathbb{S}^n : \text{dist}^{\mathbb{S}^n}(x, A) < \epsilon \right\}.
\end{equation}
The classical Measure Concentration Theorem is stated as follows:

\begin{theorem}[Measure Concentration in the Sphere \cite{MilmanSchechtman2001}]\label{measurecon}
Let $A \subset \mathbb{S}^n$ be a measurable domain satisfying $\Vol(A) \ge \frac{1}{2} \Vol(\mathbb{S}^n)$. Then, for any $\epsilon \in (0, \pi/2]$,
\begin{equation}
    \frac{\Vol(A_\epsilon)}{\Vol(\mathbb{S}^n)} \ge 1 - \sqrt{\frac{\pi}{8}} e^{-\epsilon^2 (n-1)/2}.
\end{equation}
In particular,
\begin{equation}
    \lim_{n \to \infty} \frac{\Vol(A_\epsilon)}{\Vol(\mathbb{S}^n)} = 1.
\end{equation}
\end{theorem}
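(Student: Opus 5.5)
The plan is to reduce to the extremal case through Lévy's isoperimetric inequality on $\mathbb{S}^n$, and then estimate a one-dimensional integral.

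\textbf{Step 1: reduction to a cap.} Let $B$ be a closed geodesic ball (spherical cap) with $\Vol(B)=\Vol(A)\ge \frac12\Vol(\mathbb{S}^n)$. Its radius is then at least $\pi/2$, so $B$ contains a hemisphere $H$. Lévy's inequality gives $\Vol(A_\epsilon)\ge \Vol(B_\epsilon)$ for every $\epsilon>0$, and $B_\epsilon\supseteq H_\epsilon$. It therefore suffices to show
\begin{equation*}
\frac{\Vol(\mathbb{S}^n\setminus H_\epsilon)}{\Vol(\mathbb{S}^n)}\le \sqrt{\frac{\pi}{8}}\, e^{-\epsilon^2(n-1)/2}.
\end{equation*}

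\textbf{Step 2: the complement of $H_\epsilon$.} The complement $\mathbb{S}^n\setminus H_\epsilon$ is the cap of angular radius $\pi/2-\epsilon$. In polar coordinates measured from the equator, the volume element is proportional to $\cos^{n-1}\theta\,d\theta$. Hence
\begin{equation*}
\frac{\Vol(\mathbb{S}^n\setminus H_\epsilon)}{\Vol(\mathbb{S}^n)}=\frac{\int_\epsilon^{\pi/2}\cos^{n-1}\theta\,d\theta}{2\int_0^{\pi/2}\cos^{n-1}\theta\,d\theta}.
\end{equation*}

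\textbf{Step 3: the numerator.} Substitute $\theta=\epsilon+t$ and use $\cos(\epsilon+t)\le \cos t\, e^{-\epsilon t}$? This is not quite the right inequality. Instead, use the standard bound $\cos\theta\le e^{-\theta^2/2}$ for $\theta\in[0,\pi/2]$, which follows from $\log\cos\theta\le-\theta^2/2$ since $(\log\cos)''=-\sec^2\le -1$. For $\theta=\epsilon+t$ this gives $\theta^2\ge \epsilon^2+t^2$, so
\begin{equation*}
\int_\epsilon^{\pi/2}\cos^{n-1}\theta\,d\theta\le e^{-(n-1)\epsilon^2/2}\int_0^\infty e^{-(n-1)t^2/2}dt=e^{-(n-1)\epsilon^2/2}\sqrt{\frac{\pi}{2(n-1)}}.
\end{equation*}

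\textbf{Step 4: the denominator.} We need the lower bound $I_{n-1}:=\int_0^{\pi/2}\cos^{n-1}\theta\,d\theta\ge \sqrt{\frac{2}{\pi(n-1)}}\cdot\frac{\pi}{2}\cdot\frac{1}{\,\cdot\,}$. The target is the estimate
\begin{equation*}
\frac{1}{2I_{n-1}}\sqrt{\frac{\pi}{2(n-1)}}\le\sqrt{\frac{\pi}{8}},
\end{equation*}
which is equivalent to $I_{n-1}\ge 1/\sqrt{n-1}$.

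This step is the main obstacle: it is the only place where the constant must be tracked carefully. I would prove $I_k\ge 1/\sqrt{k}$ for $k\ge1$ using the Wallis recursion $I_kI_{k-1}=\frac{\pi}{2k}$ together with the monotonicity $I_k\le I_{k-1}$. These give
\begin{equation*}
I_k^2\ge I_kI_{k+1}=\frac{\pi}{2(k+1)}.
\end{equation*}
This holds if $\frac{\pi}{2(k+1)}\ge\frac1k$, that is for $k\ge 2$. The case $k=1$ is checked directly, since $I_1=1$.

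\textbf{Conclusion.} Combining Steps 1–4 yields the stated bound. The limit as $n\to\infty$ is immediate, because the upper bound tends to $0$.
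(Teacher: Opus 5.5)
Your proof is correct and follows the route the paper points to. The paper gives no proof of its own; it attributes the result to Milman--Schechtman as a direct consequence of L\'evy's isoperimetric inequality, and your argument is that route: reduce to a hemisphere, then bound the cap using $\cos\theta\le e^{-\theta^2/2}$ and the Wallis-based estimate $I_{n-1}\ge 1/\sqrt{n-1}$. You should remove the abandoned false starts in Steps 3 and 4. You should also treat $n=1$ separately, since there your Gaussian integral diverges; this case is trivial because $\Vol(A_\epsilon)\ge\Vol(A)\ge\frac12\Vol(\mathbb{S}^1)\ge(1-\sqrt{\pi/8})\Vol(\mathbb{S}^1)$.
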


A consequence of this concentration phenomenon is the so-called \emph{Fat Equator Effect}. Let $\mathbb{S}^{n-1}$ be a totally geodesic equator in $\mathbb{S}^n$, and denote by $T_\delta(\mathbb{S}^{n-1})$ its tubular neighborhood of radius $\delta \in (0, \pi/2)$, given by
\begin{equation}
    T_\delta(\mathbb{S}^{n-1}) := \left\{ x \in \mathbb{S}^n : \text{dist}_{\mathbb{S}^n}(x, \mathbb{S}^{n-1}) \le \delta \right\}.
\end{equation}
\noindent Then, we have that in high dimensions, virtually all the measure of the sphere $\mathbb{S}^n$ concentrates in an arbitrarily thin strip around any equator $\mathbb{S}^{n-1}$.

\begin{theorem}[Fat Equator Effect, see \cite{Bengtsson}]
For every equatorial tube $T_\delta(\mathbb{S}^{n-1}) \subset \mathbb{S}^n$ of radius $\delta \in (0, \pi/2)$,
\begin{equation}\label{ineqfat}
    \frac{\Vol(T_\delta(\mathbb{S}^{n-1}))}{\Vol(\mathbb{S}^n)} \ge 1 - 2 e^{-\frac{n-2}{2} \delta^2},
\end{equation}
which implies that
\begin{equation}\label{ineqfat2}
    \lim_{n \to \infty} \frac{\Vol(T_\delta(\mathbb{S}^{n-1}))}{\Vol(\mathbb{S}^n)} = 1.
\end{equation}
\end{theorem}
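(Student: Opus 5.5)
Two routes are available: a direct computation with the volume density of the sphere in coordinates adapted to the equator, or a deduction from Theorem~\ref{measurecon}. I would use the second, since it is shorter and explains why the constant $2$ appears.

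\emph{Reduction to a hemisphere.} Let $H^+$ and $H^-$ be the two closed hemispheres bounded by $\mathbb{S}^{n-1}$. Each has $\Vol(H^\pm)=\frac12\Vol(\mathbb{S}^n)$, so Theorem~\ref{measurecon} applies to each. The key geometric observation is that $(H^+)_\delta\cap(H^-)_\delta$ is exactly the set of points at distance less than $\delta$ from the equator, which is contained in $T_\delta(\mathbb{S}^{n-1})$. To see this, note that a point $x\in H^-$ with $\operatorname{dist}(x,H^+)<\delta$ satisfies $\operatorname{dist}(x,\mathbb{S}^{n-1})=\operatorname{dist}(x,H^+)$. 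Indeed, if $x=(\sin t)\,e+(\cos t)\,\theta$ with $t\le 0$, the distance from $x$ to $H^+$ is $|t|$, realized along the meridian.

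\emph{Complement bound.} It follows that
\[
\mathbb{S}^n\setminus T_\delta\subset\bigl(\mathbb{S}^n\setminus (H^+)_\delta\bigr)\cup\bigl(\mathbb{S}^n\setminus (H^-)_\delta\bigr).
\]
Applying Theorem~\ref{measurecon} with $\epsilon=\delta\in(0,\pi/2)$ to each term gives
\[
\frac{\Vol(\mathbb{S}^n\setminus T_\delta)}{\Vol(\mathbb{S}^n)}\le 2\sqrt{\tfrac{\pi}{8}}\,e^{-\delta^2(n-1)/2}.
\]
Since $\sqrt{\pi/8}<1$ and $e^{-\delta^2(n-1)/2}\le e^{-\delta^2(n-2)/2}$, this yields \eqref{ineqfat}. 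Letting $n\to\infty$ gives \eqref{ineqfat2}.

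\emph{Main obstacle.} The only step needing care is the identification of the $\delta$-enlargement of a hemisphere with a spherical cap of the complementary hemisphere's equatorial tube. This reduces to checking that the distance from a point at height $\sin t$ to the closed hemisphere is $|t|$, which follows because meridians are minimizing geodesics orthogonal to the equator.

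\emph{Fallback.} If one prefers to avoid Theorem~\ref{measurecon}, there is a direct route. Write
\[
\frac{\Vol(\mathbb{S}^n\setminus T_\delta)}{\Vol(\mathbb{S}^n)}=\frac{2\int_\delta^{\pi/2}\cos^{n-1}t\,dt}{\int_{-\pi/2}^{\pi/2}\cos^{n-1}t\,dt}.
\]
Then use $\cos t\le e^{-t^2/2}$ in the numerator, and the Gaussian lower bound for the denominator via Wallis-type estimates. This gives the same exponent, at the cost of more bookkeeping.
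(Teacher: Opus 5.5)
Your proof is correct and follows the same route the paper sketches: you realize the $\delta$-strip as $(H^+)_\delta\cap(H^-)_\delta$, apply Theorem~\ref{measurecon} to each hemisphere, and take a union bound on the complements. You also make explicit the two steps the paper leaves implicit, namely $2\sqrt{\pi/8}<2$ and $e^{-\delta^2(n-1)/2}\le e^{-\delta^2(n-2)/2}$, which are what yield the stated constant and exponent.
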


\begin{remark}\label{rem1}
We observe at this point that the existence, for every dimension $n \in \ene$,  of the tube  $T_\delta(\mathbb{S}^{n-1})$, (which provides a precise meaning to the limit \eqref{ineqfat2}), is given by the following considerations. For each $n \in \ene$ we are considering the totally geodesic  immersion $\tilde\varphi_n: (\mathbb{S}^{n-1}, \tilde\varphi_{n}^*(g_{\mathbb{S}^{n}})) \rightarrow (\mathbb{S}^{n},g_{\mathbb{S}^{n}}) $. Then, its minimal focal distance in $\mathbb{S}^{n}$, $\tilde{\mathbb{C}}_n=\mathbb{C}_{\mathbb{S}^{n}}(\mathbb{S}^{n-1})$, (see definition \ref{minfoc} in the following section \S.2) do not depends on the dimension $n$ and we have, for all $n \in \ene$, that
$$\tilde{\mathbb{C}}_n=\mathbb{C}_{\mathbb{S}^{n}}(\mathbb{S}^{n-1})=\frac{\pi}{2}$$ 

Hence, the radius $\delta$ in equation \eqref{ineqfat2} is bounded above by 
$$\delta \leq \inf_{n \in \ene}\{\tilde{\mathbb{C}}_n\}=\frac{\pi}{2}$$

\noindent We shall return on these considerations in the statement of our main result.

\end{remark}

Fat equator effect comes from the concentration of measure stated in Theorem \ref{measurecon} in the following way: let us denote as $\mathbb{S}^{n+}$ the half sphere centered at the north pole and  as $\mathbb{S}^{n-}$ the half sphere centered at the south pole. Then, the $\epsilon$-strip around the equator can be expressed, in terms of the $\epsilon$-fattenings $\mathbb{S}^{n+}_{\epsilon}$ and $\mathbb{S}^{n-}_{\epsilon}$ as the intersection, 
$$\Omega_\epsilon=\mathbb{S}^{n+}_{\epsilon} \cap\mathbb{S}^{n-}_{\epsilon} \,.$$

\noindent Now, inequality (\ref{ineqfat}) follows applying Theorem \ref{measurecon} to the $\epsilon$-fattenings $\mathbb{S}^{n+}_{\epsilon}$ and $\mathbb{S}^{n-}_{\epsilon}$ and using the fact that ${\rm vol}(\mathbb{S}^{n+}_\epsilon)={\rm vol}(\mathbb{S}^{n-}_\epsilon)$. The conclusion is that for  \lq\lq very large'' dimension,  almost all measure in the Sphere is concentrated surrounding  the equator. 

In the previous paper \cite{GP}, we have explored the fat equator effect from the point of view of the submanifold theory. Namely, we have proved that the {\em intersection} of any equatorial band in a $n$-dimensional sphere $\mathbb{S}^n$ with a closed minimal submanifold $\Sigma^m \subseteq \mathbb{S}^n$ acumulates area in the submanifold $\Sigma$ when the dimension $n$ goes to infinity. 

It is natural, therefore, to consider the intrinsic counterpart of this issue:
\medskip

\emph{Does the same concentration phenomenon hold if we replace the totally geodesic equator $\mathbb{S}^{n-1}$ by an arbitrary closed minimal hypersurface $\Sigma^{n-1} \subset \mathbb{S}^n$?}
\medskip

For the equator $\mathbb{S}^{n-1}$, computing the volume of its tubular neighborhood $T_\delta(\mathbb{S}^{n-1})$ is straightforward due to total geodesic symmetry. However, an arbitrary closed minimal hypersurface $\Sigma^{n-1} \subset \mathbb{S}^n$ may possess highly complex topological and geometric structures. Consequently, direct computation of $\Vol(T_\delta(\Sigma^{n-1}))$ is generally unfeasible. In  \cite{gray}, and \cite{gray2},  we have classical estimations for the volume of tubes around submanifolds under extrinsic and intrinsic curvature restrictions.

As an alternative way to the estimations developed in the above references, we introduce an indirect strategy based in the results obtained in \cite{BessaGimenoPalmer2026}. This strategy will uses two primary tools: the first is the {\em  Brownian Motion on the sphere, and its associated  Mean Exit Time (MET) function}. In this way, stochastic particles trapped inside a domain serve as indirect detectors of domain volume. If volume concentrates heavily within a region, the average time required for Brownian motion to escape that region must increase accordingly.
   
   The second tool is the {\em Schwarz Symmetrization}.  This is a measure-theoretic and analytic technique allowing comparison between PDE solutions on arbitrary domains in a Riemannian manifold $M$ and explicit radial solutions on model geodesic balls in $\mathbb{S}^n$.
   
   As a consequence of this combination among elliptic PDEs (via the Poisson problem for Mean Exit Time), spherical Schwarz symmetrization (transferring local PDE analytical bounds into global geometric inequalities), and classical measure concentration inequalities, we conclude that the Fat Equator Effect is not a special feature of totally geodesic submanifolds, but rather a  property of closed minimal hypersurfaces in the sphere, namely, the central result of this paper establishes that minimality alone is sufficient to recover measure concentration in full generality.

\begin{theorem}[Fat Minimality Effect] \label{thm:main_intro}\

Let $\mathcal{F}=\{\varphi_n: (\Sigma^{n-1}, \varphi_n^*(g_{\mathbb{S}^{n}})) \rightarrow (\mathbb{S}^{n},g_{\mathbb{S}^{n}})\}_{n\in \ene}$ be a family of  closed and minimal isometric immersions. Then, for every positive and sufficiently small  $\delta \in \erre^+$ and every $\epsilon \in (0, \pi/2 - \delta)$, we have that there exists $n_\delta$ such that for every $n \geq n_\delta$,
\begin{equation}
    \Vol(T_{\delta+\epsilon}(\Sigma^{n-1})) \ge \left( 1 - \sqrt{\frac{\pi}{8}} e^{-\epsilon^2 (n-1)/2} \right) \Vol(\mathbb{S}^n).
\end{equation}
\noindent 
Consequently, for every $r \in (0, \pi/2)$,
\begin{equation}\label{limit}
    \lim_{n \to \infty} \frac{\Vol(T_r(\Sigma^{n-1}))}{\Vol(\mathbb{S}^n)} = 1.
\end{equation}
\end{theorem}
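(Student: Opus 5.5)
The plan is to reduce the statement to Theorem \ref{measurecon} applied to the set $A=T_\delta(\Sigma^{n-1})$. By the triangle inequality, $A_\epsilon\subset T_{\delta+\epsilon}(\Sigma^{n-1})$. So the first displayed inequality follows from Theorem \ref{measurecon} once I prove the following claim: \emph{for every sufficiently small $\delta>0$ there is $n_\delta$ such that $\Vol(T_\delta(\Sigma^{n-1}))\ge \frac12\Vol(\mathbb{S}^n)$ for all $n\ge n_\delta$.} The limit \eqref{limit} then follows. Given $r\in(0,\pi/2)$, write $r=\delta+\epsilon$ with $\delta$ small and $\epsilon>0$, and let $n\to\infty$ in the exponential bound; for $r\le\delta$ the ratio is monotone in $r$, so it suffices to treat $r$ of this form. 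The whole proof therefore concentrates on a volume lower bound for the tube of fixed small radius $\delta$, uniform along the family $\mathcal F$.

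To prove the claim I will bound the complement $D_n=\mathbb{S}^n\setminus T_\delta(\Sigma^{n-1})$, treating each connected component $D$ separately. On $D$ consider the Mean Exit Time function $E_D$, the solution of $\Delta E_D=-1$ in $D$ with $E_D=0$ on $\partial D$.

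\emph{Upper bound.} By spherical Schwarz symmetrization (Talenti-type comparison), $E_D^*\le E_{B_R}$, where $B_R\subset\mathbb{S}^n$ is the geodesic ball with $\Vol(B_R)=\Vol(D)$ and $E_{B_R}$ is its explicit radial MET. In particular $\sup E_D\le E_{B_R}(o)$ and $\int_D E_D\le\int_{B_R}E_{B_R}$.

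\emph{Lower bound.} Let $\rho$ be the distance to $\Sigma$. Minimality of $\varphi_n$ together with the Riccati/Hessian comparison along normal geodesics (Heintze–Karcher type) gives $\Delta\rho\le-(n-1)\tan\rho$ on $\{\rho<\pi/2\}$, in the barrier sense past the focal set. I will look for a radial function $v=\psi(\rho)$ with $\psi(\delta)=0$ and $\psi'\ge0$, suitably adapted, such that $\Delta v\ge-1$ on $D$. The maximum principle then gives $v\le E_D$, which bounds $E_D$ from below in terms of how far $D$ reaches from $\Sigma$.

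Comparing the two bounds, together with the explicit asymptotics of $E_{B_R}$ as $n\to\infty$, should force $\Vol(D_n)/\Vol(\mathbb{S}^n)\to0$, and in particular make it at most $\frac12$ for $n\ge n_\delta$. A fallback for this step is the following. Any closed minimal hypersurface meets every closed hemisphere, so every point of $\mathbb{S}^n$ lies within distance $\pi/2$ of $\Sigma$. Hence $D_n$ is covered by the shells $\{\delta<\rho<\pi/2\}$. I would then try to bound the volume of these shells by $\Vol(T_\delta)$ times a factor that decays in $n$, using the $\cos^{n-1}$ weight coming from the Jacobian along normal geodesics.

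The main obstacle is the sign in the lower bound. The comparison $\Delta\rho\le-(n-1)\tan\rho$ naturally produces supersolutions, not subsolutions. A Jacobian estimate along normal geodesics likewise produces Heintze–Karcher type \emph{upper} bounds for tube volumes, whereas the claim needs a \emph{lower} bound. My first attempt is to use the MET on the tube side rather than the complement: solve the Poisson problem on $T_\delta(\Sigma)$ itself, or on each side $\Omega_i$ of $\Sigma$ when $\Sigma$ is embedded. There the comparison has the right direction, since $-\psi(\rho)$ with $\psi$ increasing becomes a subsolution. Via Schwarz symmetrization this would give that each $\Omega_i\cap T_\delta$ carries a fixed fraction of $\Vol(\Omega_i)$ that tends to $1$ as $n\to\infty$. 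This is the step where I expect the detailed estimates to be delicate, and where the hypothesis that $\delta$ is sufficiently small (below the focal radius, uniformly in $n$ as in Remark \ref{rem1}) will be used.
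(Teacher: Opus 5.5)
Your reduction is the same as the paper's: everything rests on showing $\Vol(T_\delta(\Sigma^{n-1}))\ge\frac12\Vol(\mathbb{S}^n)$ for $n\ge n_\delta$. After that, Theorem \ref{measurecon} together with $A_\epsilon\subset T_{\delta+\epsilon}(\Sigma^{n-1})$ gives the bound, and choosing $\delta<r$ gives the limit. The gap is that you never prove this half-volume claim, and it is the entire content of the theorem.

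Your main route uses the MET of the complement $D_n$, and it fails for the reason you give yourself. The inequality $\Delta\rho\le-(n-1)\tan\rho$ only produces supersolutions there, so you get no lower bound on $E_D$. The shell/Jacobian fallback only gives Heintze--Karcher \emph{upper} bounds on tube volumes. Your tube-side paragraph points the right way, but it leaves the decisive step as ``delicate estimates''. It also aims at a per-side statement, that each $\Omega_i\cap T_\delta$ carries a fraction of $\Vol(\Omega_i)$ tending to $1$, which is neither justified nor needed. Moreover, posing the Poisson problem on one side of $\Sigma$ with a Dirichlet condition on $\Sigma$ destroys the effect. The drift $-(n-1)\tan\rho$ drives Brownian motion onto $\Sigma$, so that MET is small, not large.

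The missing idea is to bound one single number, $\max E_{T_\delta(\Sigma^{n-1})}$, from both sides, on the \emph{full} tube: $\Sigma$ lies in the interior and the Dirichlet condition is imposed only on $\partial T_\delta$.

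\emph{From below.} For $\delta\le\mathbb{C}(\mathcal F)$, let $\bar E$ be the MET of the equatorial tube, so $\bar E'\le0$ and $\bar E'(0)=0$; the latter means there is no kink along $\Sigma$. Set $w=\bar E\circ\rho$. Then $\Delta w=\bar E''+\bar E'\Delta\rho\ge\bar E''-(n-1)\tan\rho\,\bar E'=-1$, so the maximum principle gives $w\le E_{T_\delta(\Sigma^{n-1})}$. Hence $\max E_{T_\delta(\Sigma^{n-1})}\ge\bar E(0)\to\infty$, roughly like $e^{(n-1)\delta^2/2}$ (Corollary \ref{cor:met_divergence}).

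\emph{From above.} Proposition \ref{maximums} and the Talenti comparison (Theorem \ref{thm:talenti}), applied to the tube itself, give $\max E_{T_\delta(\Sigma^{n-1})}\le E_{B_R}(p_N)$, where $B_R$ has the same relative volume as the tube. If $\Vol(T_\delta)\le\frac12\Vol(\mathbb{S}^n)$, then $R\le\pi/2$. By domain monotonicity,
$E_{B_R}(p_N)\le\int_0^{\pi/2}\sin^{1-n}r\int_0^r\sin^{n-1}t\,dt\,dr=O(\log n/n)\to0$.

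For large $n$ these two bounds are incompatible, which forces the half-volume claim. That is exactly the paper's proof. You already had both ingredients, the Talenti upper bound and the transplanted subsolution. You applied Talenti to the complement, where no lower bound is available, instead of to the tube, where both bounds hold for the same quantity.
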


\begin{remark}\label{goodef2}
Concentration of measure in tubes around submanifolds occurs as the dimension of these immersions increases toward infinity. Consequently, the existence of a positive radius that allows these tubes to be defined for all dimensions must be guaranteed, rendering the limit set forth in \eqref{limit} well-defined, (see remark \ref{rem1}). This prior consideration is what underlies the expression {\em sufficiently small  $\delta \in \erre^+$}, that we have used in the statement of the above theorem, avoiding some technicalities concerning its precise meaning, which is related with the family of immersions $\mathcal{F}$ in a way we shall explain in the next section \S.2, (see definition \ref{goodef3}). We shall state in a precise way Theorem \ref{thm:main_intro} in Theorem \ref{thm_main} in section \S.3.
\end{remark}

To prove this theorem, we shall see as the Mean Exit Time (MET)  function,  acts as an accurate, indirect volume detector: a large volume implies a large MET, whereas a small MET certifies a small volume.


\subsection{Outline}\

The structure of the paper can be outlined as follows: after the first section \S.1, (the {\em Introduction}), where we explain the context of the problem, in the second section, \S.2 {\em Preliminaires}, we describe the notion of tubular neighbourhood of a submanifold in a Riemannian manifold, and the two instruments we shall use to prove the main result: the mean exit time function of the Brownian motion and the Schwarz symmetrization on the sphere. The third section is devoted to the proof of our theorem.


\section{Preliminaries}\
\medskip

In this section, we review the definition of tubular neighbourhood around a submanifold, and the  two main technical tools required for the proof of Theorem \ref{thm:main_intro}: the Mean Exit Time function of Brownian motion and Schwarz symmetrization on Riemannian manifolds.

\subsection{Tubes around submanifolds}\label{tubes}\
\medskip

 We need to recall some facts about the geometry of tubes around submanifolds, following the book \cite{gray}. 
 
 Let $\varphi \colon \!(\Sigma, \varphi^*g) \to (M,g)$ be an isometric embedding of a complete Riemannian $m$-manifold $\Sigma$  into a Riemannian $n$-manifold $(M,g)$, (we shall refer $\Sigma$ as a {\em submanifold} of $M$). Let us identify $\Sigma$ with the zero section  $s\colon \Sigma \to T\Sigma^{\perp}$  of the normal bundle. The exponential map $\exp^{\perp}\colon T\Sigma^{\perp}\to M$ is a diffeomorphism when restricted to an open neighborhood $\Omega_{\Sigma}$ of $ T\Sigma^{\perp}$ containing $\Sigma$. To describe this set $\Omega_{\Sigma}$ observe that for every $q\in \exp^{\perp}(\Omega_{\Sigma})$ there exists a unique minimal geodesic  $\gamma_{\xi}\colon [0, \ell]\to M $ joining  $p=\gamma_{\xi}(0)\in \Sigma$ to $\gamma_{\xi}(\ell)=q$ with  $\gamma_{\xi}'(0)=\xi\in  \big(T_{\gamma_{\xi}(0)}\Sigma\big)^{\perp}$. The geodesic $\gamma_{\xi}$  realize the distance $\ell={\rm dist}_{_M}(\Sigma, q)$ between $q$ and $\Sigma$.

\begin{definition}\

Let us define the focal distance $c\colon ST\Sigma^{\perp}=\{(p,\xi)\in T\Sigma^{\perp}\colon \Vert \xi\Vert=1\}\to \mathbb{R}$  as $$c(p, \xi)=\sup_{t>0}\{t\colon {\rm dist}_{_M}(p, \gamma_{\xi} (t))=t\} $$
\noindent Then we have that $$ \exp^{\perp}\big(\Omega_{\Sigma}\big)= \{ \exp^{\perp}(p,s\xi)=\gamma_{\xi}(s),\,\,\, (p,\xi)\in ST\Sigma^{\perp}\,,\,0\leq s\leq c(p,\xi)\}.  $$ 
\end{definition}

\begin{definition}\label{minfoc}

The minimal focal distance $\mathbb{C}_M(\Sigma)$ of $\Sigma$ in $M$ is defined as $$\mathbb{C}_M(\Sigma)=\inf\{c(p, \xi), (p, \xi) \in ST\Sigma^{\perp}\}.$$ 
\end{definition}

\begin{definition}\

Let us consider $\varphi: (\Sigma^m,\varphi^*g) \longrightarrow (M^n, g)$ a submanifold of a Riemannian manifold $M$, and a number  $r\leq \mathbb{C}_M(\Sigma)$. The {\em tube of radius} $r$ about $\Sigma$ in $M$ is the set
\begin{equation*}
  T_r(\Sigma):=\left\{ x \in M\, :\, \text{dist}_M(x, \Sigma) \leq r \right\}.
\end{equation*}

\noindent and {\em the tubular hypersurface} of radius $r$ about $\Sigma$ in $M$ is the set
\begin{equation*}
  \partial T_r(\Sigma):=\left\{ x \in M\, :\, \text{dist}_M(x, \Sigma) = r \right\}.
\end{equation*}
\end{definition}
\begin{remark}\

 If $M$ is complete and $\Sigma$ is closed then $\mathbb{C}_M(\Sigma) >0$. Thus  we have  for all  $0\leq r\leq \mathbb{C}_M(\Sigma)$ that
\begin{equation*}
T_r(\Sigma)=\exp^{\perp}\Big(\left\{(p, \xi)\in T\Sigma^\bot, \text{ with }\, \Vert \xi\Vert \leq r\right\}\Big) \subseteq  \exp^{\perp}\left(\Omega_\Sigma\right).
\end{equation*}
\noindent so, as $\Sigma$ is closed, the points in the tube of radius $r$ are those whose distance to the submanifold is less than or equal to $r$, that is, those that can be reached by a minimizing geodesic that starts from the submanifold orthogonally. 
\end{remark}
\begin{definition}\label{goodef3}\

Let us consider $\mathcal{F}:=\{\varphi_n:(\Sigma^{n-1},\varphi_{n}^*g_{\mathbb{S}^n})\rightarrow (\mathbb{S}^n, \varphi^*g_{\mathbb{S}^n}) \}_{n\in \ene}$ a family of codimension $1$ isometric immersions in the sphere of sectional curvature $1$. The minimal focal distance of $\Sigma^{n-1}$ in $\mathbb{S}^{n}$, $\mathbb{C}_{\mathbb{S}^{n}}(\Sigma^{n-1})$, (see definition \ref{minfoc}) depends on the immersion $\varphi_n$, (on the metric $\varphi_{n}^*g_{\mathbb{S}^n}$), and, in particular, of the dimension $n$. We shall denote it as $\mathbb{C}_{\mathbb{S}^{n}}(\Sigma^{n-1})=\mathbb{C}_n$. Then, we shall define the {\em minimal focal distance associated to the family of immersions $\mathcal{F}$} as the number
$$\mathbb{C}(\mathcal{F}):=\inf_{n \in \ene} \{\mathbb{C}_n\} \geq 0$$ 


As we have mentioned before in remark \ref{rem1}, when we consider the family of totally geodesic immersions $\tilde\varphi_n: (\mathbb{S}^{n-1}, \tilde\varphi_{n}^*(g_{\mathbb{S}^{n}}))\rightarrow (\mathbb{S}^{n},g_{\mathbb{S}^{n}})$, the minimal focal distance in $\mathbb{S}^{n}$, 
$$\tilde{\mathbb{C}}_n=\mathbb{C}_{\mathbb{S}^{n}}(\mathbb{S}^{n-1})$$
\noindent  do not depends on the dimension $n$ and we have, for all $n \in \ene$, that
$$\tilde{\mathbb{C}}_n=\mathbb{C}_{\mathbb{S}^{n}}(\mathbb{S}^{n-1})=\inf\{c(p, \xi), (p, \xi) \in ST\mathbb{S}^{{n-1}^{\perp}}\}=\frac{\pi}{2}$$ 
\end{definition}
\begin{remark}
To simplify the notation and provided there is no risk of confusion, throughout the rest of the paper we will use the notation $\Sigma \subseteq M$ or $\varphi: \Sigma \to M$ to refer to isometrically immersed submanifolds.
\end{remark}
\subsection{The Mean Exit Time (MET) Function}\
\medskip

Let $(M, g)$ be an $n$-dimensional Riemannian manifold, and let $\Omega \subset M$ be a open connected domain with smooth boundary $\partial\Omega$. Let $X_t$ denote Brownian motion on $M$ initialized at $x \in \Omega$, and let $\tau_\Omega^x = \inf\{ t > 0 : X_t \notin \Omega \}$ be the first exit time of $X_t$ from $\Omega$.

The \emph{Mean Exit Time (MET)} function $E_\Omega(x) := \E^x[\tau_\Omega^x]$ represents the expected time spent by Brownian motion inside $\Omega$ before hitting the boundary $\partial\Omega$. It is well known that $E_\Omega \in C^\infty(\Omega) \cap C^0(\overline{\Omega})$ is the unique classical solution to the Poisson boundary value problem:
\begin{equation} \label{eq:poisson_met}
    \begin{cases}
        \Delta^M E_\Omega = -1 & \text{in } \Omega, \\
        E_\Omega = 0 & \text{on } \partial\Omega,
    \end{cases}
\end{equation}
where $\Delta^M$ denotes the Laplace--Beltrami operator on $M$.

Intuitively, $E_\Omega(x)$ measures how long a stochastic particle remains trapped inside $\Omega$. If $\Omega = T_\delta(\Sigma^{n-1})$ concentrates almost the entire volume of $M$, the exit time $E_\Omega$ should be large, as it has been proved in \cite{BessaGimenoPalmer2026}.

We have estimated the MET on two canonical model domains in the standard sphere $\mathbb{S}^n$:
\begin{enumerate}
    \item The equatorial tube $T_\delta(\mathbb{S}^{n-1}) \subset \mathbb{S}^n$ of radius $\delta \in (0, \pi/2)$.
    \item The polar geodesic ball $B_{\pi/2 - \delta}(p_N) \subset \mathbb{S}^n$ centered at the north pole $p_N$.
\end{enumerate}

As established in Theorem 1.1 in \cite{BessaGimenoPalmer2026}, these model domains display drastically contrasting asymptotic behaviors as $n \to \infty$:

\begin{theorem}[MET behavior in the sphere, \cite{BessaGimenoPalmer2026}]\label{thmmain}
Let us consider the tube  $T_\delta(\mathbb{S}^{n-1}) \subseteq \mathbb{S}^{n}$, of radius $\delta \in (0,\pi/2)$, around the totally geodesic submanifold $\varphi: \mathbb{S}^{n-1} \rightarrow \mathbb{S}^{n}$, and the geodesic ball $B_{\pi/2-\delta}(p_N) \subseteq \mathbb{S}^{n}$ centered at the north pole $p_N$. Then, we have the following statements:
\begin{itemize}
\item[i)] The mean exit time function $E_{T_\delta(\mathbb{S}^{n-1})} $ defined on the tube $T_{\delta}(\mathbb{S}^{n-1})$, is bounded from below by the function $F_{\delta,n}\circ s$ as
$$
\begin{aligned}
E_{T_\delta(\mathbb{S}^{n-1})} (x)=E_{T_\delta(\mathbb{S}^{n-1})} (s(x))&\geq (F_{\delta,n}\circ s)(x)\,\,\,\forall x \in T_{\delta}(\mathbb{S}^{n-1})
\end{aligned} $$
\noindent where $s(x)=\mathrm{dist}_{\mathbb{S}^{n}}(x,\mathbb{S}^{n-1})$ and  $F_{\delta,n}: (0,\delta) \rightarrow \mathbb{R}$ is the function defined as
$$F_{\delta,n}(r)=\frac{(\delta-r)}{\sin(\delta)}\cdot\frac{1-\cos^{n}(r)}{n\cos^{n-1}(r)}$$
\noindent which satisfies  that
$$ \lim_{n\to \infty}F_{\delta,n}(r)=\infty \,\,\,\forall r \in (0,\delta)$$
\noindent Hence
$$\max E_{T_\delta(\mathbb{S}^{n-1})} \longrightarrow \infty \quad \text{as } n \to \infty$$

\item[ii)] The mean exit time function $E_{B_{\pi/2 - \delta}(\mathbb{S}^n)}$ from the ball  $B_{\pi/2-\delta}(p_N) \subseteq \mathbb{S}^{n}$ is uniformly  bounded from above as $$E_{B_{\pi/2 - \delta}(\mathbb{S}^n)}(y)=E_{B_{\pi/2 - \delta}(\mathbb{S}^n)}(t(y))\leq G_{\delta,n}:= \frac{1-\cos(\pi/2-\delta)}{n\cos(\pi/2-\delta)}\cdot$$\label{thmprelim}
\noindent where $t(y)=\mathrm{dist}_{\mathbb{S}^{n}}(y,p_N)$, and we have that the uniform bound $G_{\delta,n}$ satisfies
 $$\lim_{n\to \infty}G_{\delta,n}=0\, .$$
 \noindent Hence 
 $$\max E_{B_{\pi/2 - \delta}(\mathbb{S}^n)} \longrightarrow 0 \quad \text{as } n \to \infty$$
 \end{itemize}
\end{theorem}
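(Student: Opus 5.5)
The plan is to use the symmetry of both model domains. Each is a sublevel set of a distance function whose level sets have constant mean curvature: $s(x)=\mathrm{dist}(x,\mathbb{S}^{n-1})$ for the tube, $t(y)=\mathrm{dist}(y,p_N)$ for the cap. Solving \eqref{eq:poisson_met} with a radial ansatz reduces it to a second-order ODE, which can be integrated explicitly. The classical solution of \eqref{eq:poisson_met} is unique, so this radial solution is the mean exit time itself. The two estimates then come from elementary comparisons inside the resulting integrals.

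\emph{Tube.} In the coordinates $x=(\cos s\,\theta,\sin s)$, with $\theta\in\mathbb{S}^{n-1}$ and $s\in(-\pi/2,\pi/2)$, the volume density is $\cos^{n-1}s$. For a function $f(s)$ this gives $\Delta f=f''-(n-1)\tan s\,f'$, that is, $\Delta f=\cos^{1-n}s\,(\cos^{n-1}s\,f')'$. I impose $E'(0)=0$, by the symmetry $s\mapsto -s$, and $E(\delta)=0$. Integrating once and then again yields
\[
E_{T_\delta(\mathbb{S}^{n-1})}(r)=\int_r^\delta \frac{1}{\cos^{n-1}\sigma}\int_0^\sigma \cos^{n-1}u\,du\,d\sigma .
\]
For the lower bound I use $\sin u\le \sin\sigma$ on $[0,\sigma]$, which gives $\int_0^\sigma\cos^{n-1}u\,du\ge \frac{1}{\sin\sigma}\int_0^\sigma \cos^{n-1}u\sin u\,du=\frac{1-\cos^n\sigma}{n\sin\sigma}$. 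The integrand is therefore at least
\[
\frac{1}{\sin\delta}\cdot\frac{1-\cos^n\sigma}{n\cos^{n-1}\sigma}.
\]
The function $\sigma\mapsto(1-\cos^n\sigma)/\cos^{n-1}\sigma=\cos^{1-n}\sigma-\cos\sigma$ is increasing on $(0,\pi/2)$. So on $[r,\delta]$ it is bounded below by its value at $\sigma=r$, and integrating over $[r,\delta]$ gives exactly $F_{\delta,n}(r)$. For fixed $r\in(0,\delta)$ we have $\cos r<1$, so $\cos^{1-n}r$ grows geometrically in $n$ and beats the factor $1/n$. Hence $F_{\delta,n}(r)\to\infty$, and consequently $\max E_{T_\delta(\mathbb{S}^{n-1})}\to\infty$.

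\emph{Polar cap.} With $R=\pi/2-\delta$ and density $\sin^{n-1}t$, we have $\Delta f=\sin^{1-n}t\,(\sin^{n-1}t\,f')'$. Regularity at the pole forces the boundary term at $t=0$ to vanish, so
\[
-E'(t)=\frac{1}{\sin^{n-1}t}\int_0^t\sin^{n-1}u\,du .
\]
Now I use $\cos u\ge\cos t$ on $[0,t]$, which gives $\int_0^t\sin^{n-1}u\,du\le \frac{\sin^n t}{n\cos t}$, and hence $-E'(t)\le \tan t/n$. Since $E$ is decreasing, its maximum is attained at the pole. Integrating from $0$ to $R$ gives
\[
E\le -\frac{\ln\cos R}{n}\le \frac{1-\cos R}{n\cos R}=G_{\delta,n},
\]
where the last step uses $\ln x\ge 1-1/x$. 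Because $\cos R=\sin\delta>0$ is independent of $n$, it follows that $G_{\delta,n}\to 0$.

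The only delicate step is the lower bound for the tube. Both the direction of the $\sin$-comparison and the monotonicity of $\cos^{1-n}\sigma-\cos\sigma$ have to be arranged so that the bound pulls out cleanly as the factor $(\delta-r)$ times the value of that function at $r$. The remaining steps are routine ODE integration.
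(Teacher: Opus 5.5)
Your proof is correct. The paper gives no argument for this statement: it is imported as Theorem 1.1 of \cite{BessaGimenoPalmer2026} and used as a black box in Corollaries \ref{cor:met_divergence} and \ref{cor:upper_bound_met}. So what you have written is a self-contained replacement for the citation, not a variant of an internal proof.

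Your route is the elementary one. Symmetry plus uniqueness for \eqref{eq:poisson_met} reduces both problems to the ODEs $(\cos^{n-1}s\,E')'=-\cos^{n-1}s$ and $(\sin^{n-1}t\,E')'=-\sin^{n-1}t$. For the tube this gives the exact model solution
\[
E_{T_\delta(\mathbb{S}^{n-1})}(r)=\int_r^\delta \frac{\Vol(T_\sigma(\mathbb{S}^{n-1}))}{\Vol(\partial T_\sigma(\mathbb{S}^{n-1}))}\,d\sigma .
\]
The comparisons $\sin u\le\sin\sigma\le\sin\delta$ and $\cos u\ge\cos t$ then do all the work, and the inequality directions all check out:
\begin{itemize}
\item The first comparison, together with the monotonicity of $\cos^{1-n}\sigma-\cos\sigma$, reproduces $F_{\delta,n}$ exactly, including the factor $(\delta-r)/\sin\delta$.
\item The second gives $\max E_{B_{\pi/2-\delta}(p_N)}\le -\ln(\sin\delta)/n$. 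This is slightly sharper than $G_{\delta,n}$ before you relax it with $\ln x\le x-1$.
\end{itemize}

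One thing is worth stating explicitly: why the radial functions you build are the classical solutions.
\begin{itemize}
\item For the tube, your formula extended to signed $s\in[-\delta,\delta]$ is even and smooth in $s=\arcsin x_{n+1}$.
\item At the pole, either note that $t\mapsto\sin^{1-n}t\int_0^t\sin^{n-1}u\,du$ is odd and analytic with leading term $t/n$. Or, as you implicitly do, first get radial symmetry of the MET from uniqueness and rotation invariance, then read off the vanishing boundary term from its smoothness.
\end{itemize}

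Compared with the citation, your argument costs under a page. In exchange, it makes the two asymptotic facts on which the proof of Theorem \ref{thm_main} hinges verifiable within the paper, and it yields the exact model MET rather than only the bounds.
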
 


When we consider the mean exit time function defined on a tubular neighbourhood around a minimal hypersurface isometrically immersed in a closed Riemannian manifold with Ricci curvature bounded from below by the corresponding Ricci curvature of the unit sphere, we obtain the following comparison theorem, (Theorem 1.2 in \cite{BessaGimenoPalmer2026}):

\begin{theorem}[MET Comparison Theorem \cite{BessaGimenoPalmer2026}] \label{thm:met_comp}
Let $M^n$ be a Riemannian manifold with $\text{Ric}_M \ge n-1$, and let $\Sigma^{n-1} \subset M$ be a closed minimal hypersurface. Then, given $\delta \leq \mathbb{C}_n$, the MET function defined on the tubular neighborhood $T_\delta(\Sigma^{n-1})$ is bounded below pointwise by the MET solution transplanted from the model equatorial tube in $\mathbb{S}^n$:
\begin{equation}
    E_{T_\delta(\Sigma^{n-1})}(x) \ge E_{T_\delta(\mathbb{S}^{n-1})}(r_\Sigma(x)) \quad \forall x \in T_\delta(\Sigma^{n-1}),
\end{equation}
where $r_\Sigma(x) := \text{dist}_M(x, \Sigma^{n-1})$.
\end{theorem}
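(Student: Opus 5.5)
The plan is to prove the comparison by transplanting the radial model solution to $M$ through the distance function $r_\Sigma$ and then applying the maximum principle. Let $f:[0,\delta]\to\R$ denote the radial profile of the model MET, so that $E_{T_\delta(\mathbb{S}^{n-1})}(x)=f(s(x))$ with $s(x)=\mathrm{dist}_{\mathbb{S}^n}(x,\mathbb{S}^{n-1})$. In $\mathbb{S}^n$ the level sets of $s$ are parallel hypersurfaces to the equator, with $\Delta^{\mathbb{S}^n}s=-(n-1)\tan s$. Hence $f$ solves the ODE
\begin{equation*}
f''(s)-(n-1)\tan(s)\,f'(s)=-1,\qquad f'(0)=0,\quad f(\delta)=0 .
\end{equation*}
The first step is to record that $f'\le 0$ on $[0,\delta]$. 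Writing the ODE as $(\cos^{n-1}(s) f')'=-\cos^{n-1}(s)$ and integrating from $0$ gives
\begin{equation*}
f'(s)=-\frac{1}{\cos^{n-1}(s)}\int_0^s\cos^{n-1}(t)\,dt\le 0 .
\end{equation*}

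The second, geometric step is a Laplacian comparison for $r_\Sigma$ on $T_\delta(\Sigma)\setminus\Sigma$. Because $\delta\le\mathbb{C}_n$, every point of the tube off $\Sigma$ is reached by a unique minimizing normal geodesic without focal points before distance $\delta$, so $r_\Sigma$ is smooth there. Along a unit-speed normal geodesic $\gamma$, the mean curvature $m(t)=\Delta r_\Sigma(\gamma(t))$ of the parallel hypersurfaces satisfies the Riccati inequality
\begin{equation*}
m'\le -\frac{m^2}{n-1}-\mathrm{Ric}(\gamma',\gamma')\le-\frac{m^2}{n-1}-(n-1),
\end{equation*}
using the trace (Cauchy--Schwarz) inequality for the shape operator and $\mathrm{Ric}_M\ge n-1$. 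The initial condition is $m(0)=0$ by minimality of $\Sigma$. The function $\bar m(t)=-(n-1)\tan t$ solves the corresponding Riccati equation with equality and the same initial value, so a standard ODE comparison gives
\begin{equation*}
\Delta r_\Sigma\le -(n-1)\tan r_\Sigma .
\end{equation*}
This is the Heintze--Karcher type estimate, and it must be applied to both sides of $\Sigma$, using $-\nu$ on the other side, which is fine since the mean curvature vanishes.

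The third step combines the two. Set $u=f\circ r_\Sigma$. On $T_\delta(\Sigma)\setminus\Sigma$,
\begin{equation*}
\Delta u=f''(r_\Sigma)+f'(r_\Sigma)\,\Delta r_\Sigma\ \ge\ f''(r_\Sigma)-(n-1)\tan(r_\Sigma)f'(r_\Sigma)=-1 ,
\end{equation*}
because $f'\le 0$. Hence $w:=E_{T_\delta(\Sigma)}-u$ satisfies $\Delta w\le 0$ in the tube minus $\Sigma$, and $w=0$ on $\partial T_\delta(\Sigma)$.

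The main obstacle is the lack of smoothness of $u$ across $\Sigma$ itself, and possibly at non-embedded points of the immersion. I would handle this using $f'(0)=0$, which makes $u$ a $C^1$ function near $\Sigma$ with no jump in normal derivative, so no singular positive measure appears in $\Delta u$ along $\Sigma$. Consequently $\Delta u\ge -1$ holds in the distributional (or barrier) sense on the whole tube. Concretely, I would test against nonnegative $\varphi\in C_c^\infty$, integrate by parts on $\{r_\Sigma>\eta\}$, and let $\eta\to0$: the boundary terms are $\int_{\{r_\Sigma=\eta\}}\varphi f'(\eta)$, which tend to $0$. Then the weak minimum principle for superharmonic functions gives $w\ge 0$, that is, $E_{T_\delta(\Sigma)}(x)\ge E_{T_\delta(\mathbb{S}^{n-1})}(r_\Sigma(x))$ throughout the tube.
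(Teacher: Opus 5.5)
The paper does not prove this statement. It is imported as Theorem 1.2 of \cite{BessaGimenoPalmer2026} and used as a black box, so there is no internal proof to compare yours against.

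Your argument is correct, and it is the transplantation scheme that the word ``transplanted'' in the statement points to. Its ingredients are:
\begin{enumerate}
\item the model profile $f$ with $f'\le 0$;
\item the Heintze--Karcher estimate $\Delta r_\Sigma\le -(n-1)\tan r_\Sigma$, from the traced Riccati inequality with $m(0^+)=0$ by minimality;
\item the resulting subsolution inequality $\Delta(f\circ r_\Sigma)\ge -1$;
\item the minimum principle for $E_{T_\delta(\Sigma)}-f\circ r_\Sigma$, which vanishes on $\partial T_\delta(\Sigma)$.
\end{enumerate}

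Two minor remarks.

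First, you can skip the distributional step across $\Sigma$. Your formula shows $f'$ is odd in $s$, so $f$ extends to an even smooth function on $(-\pi/2,\pi/2)$. Locally, $f\circ r_\Sigma$ is this extension composed with the signed distance to $\Sigma$. It is therefore smooth across $\Sigma$, with $\Delta(f\circ r_\Sigma)=f''(0)=-1$ there.

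Second, the observation $f'(0)=0$ only handles the crease of $r_\Sigma$ along $\Sigma$. It says nothing about self-intersections of an immersion, which you listed as part of the obstacle. Self-intersections cannot occur here anyway. The hypothesis $\delta\le\mathbb{C}_n$, read as in the paper's description of tubes (the normal exponential map is a diffeomorphism on a neighbourhood of the zero section), already forces $\Sigma$ to be embedded.
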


Applied directly to the mean exit time function defined on a tubular neighbourhood around a minimal hypersurface isometrically immersed in the sphere $\mathbb{S}^n$, Theorem \ref{thm:met_comp} yields the following immediate consequence:

\begin{corollary}[MET Divergence on Minimal Spherical Tubes] \label{cor:met_divergence}\

Let $\Sigma^{n-1} \subset \mathbb{S}^n$ be a closed minimal hypersurface. Then, if $\delta \leq \mathbb{C}_n=\mathbb{C}_{\mathbb{S}^{n}}(\Sigma^{n-1})$, we have:
\begin{equation}\label{ineq1}
    E_{T_\delta(\Sigma^{n-1})} \ge E_{T_\delta(\mathbb{S}^{n-1})}
\end{equation}
\noindent pointwise. 

Consequently, if we consider a family of closed and minimal isometric immersions $\mathcal{F}=\{\varphi_n:\Sigma^{n-1} \rightarrow \mathbb{S}^n\}_{n\in \ene}$, and we assume that the number $\mathbb{C}(\mathcal{F}):=inf_{n \in \ene} \{\mathbb{C}_n\} > 0$, then, for all $\delta \leq \mathbb{C}(\mathcal{F})$, the maximum of the mean exit time function in $T_\delta(\Sigma^{n-1})$ grows without bound as the dimension increases:
\begin{equation}\label{ineq2}
    \max E_{T_\delta(\Sigma^{n-1})} \longrightarrow \infty \quad \text{as } n \to \infty.
\end{equation}
\end{corollary}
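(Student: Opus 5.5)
The statement to prove is Corollary \ref{cor:met_divergence}. It has two parts: the pointwise inequality \eqref{ineq1} and the divergence \eqref{ineq2}.

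For \eqref{ineq1}, I would apply Theorem \ref{thm:met_comp} with $M=\mathbb{S}^n$. The round unit sphere has $\mathrm{Ric}_{\mathbb{S}^n}=n-1$, so the Ricci hypothesis holds with equality. Since $\Sigma^{n-1}$ is a closed minimal hypersurface and $\delta\le\mathbb{C}_n$, the tube $T_\delta(\Sigma^{n-1})$ is well defined. The distance function $r_\Sigma$ is smooth off $\Sigma$ inside this tube, so the comparison theorem gives
\[
E_{T_\delta(\Sigma^{n-1})}(x)\ge E_{T_\delta(\mathbb{S}^{n-1})}(r_\Sigma(x))
\]
for all $x\in T_\delta(\Sigma^{n-1})$. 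Reading the right-hand side as the radial model function evaluated at $r_\Sigma(x)$ is exactly what \eqref{ineq1} asserts pointwise.

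For \eqref{ineq2}, fix $\delta\le \mathbb{C}(\mathcal{F})$. Then $\delta\le\mathbb{C}_n$ for every $n$, so \eqref{ineq1} holds in every dimension. If $\mathbb{C}(\mathcal{F})\ge\pi/2$, replace $\delta$ by any smaller value in $(0,\pi/2)$. This is harmless because the tubes are nested and MET is monotone under domain inclusion by the maximum principle. With $\delta\in(0,\pi/2)$ we may combine \eqref{ineq1} with Theorem \ref{thmmain} i):
\[
E_{T_\delta(\Sigma^{n-1})}(x)\ \ge\ E_{T_\delta(\mathbb{S}^{n-1})}(r_\Sigma(x))\ \ge\ F_{\delta,n}(r_\Sigma(x)).
\]

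Next, fix $r_0\in(0,\delta)$, say $r_0=\delta/2$. We need a point $x_n\in T_\delta(\Sigma^{n-1})$ with $r_\Sigma(x_n)=r_0$. Take any $p\in\Sigma$ and a unit normal $\xi$, and set $x_n=\gamma_\xi(r_0)$. Because $r_0<\delta\le\mathbb{C}_n\le c(p,\xi)$, this geodesic is minimizing to $\Sigma$, so $r_\Sigma(x_n)=r_0$. Then
\[
\max E_{T_\delta(\Sigma^{n-1})}\ge E_{T_\delta(\Sigma^{n-1})}(x_n)\ge F_{\delta,n}(r_0).
\]
Since $F_{\delta,n}(r_0)\to\infty$ as $n\to\infty$ by Theorem \ref{thmmain} i), the maximum diverges. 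This can also be checked directly: the factor $\cos^{-(n-1)}(r_0)/n$ grows exponentially while the other factors stay bounded away from zero.

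The main point needing care is the first step. One must check that the transplanted function in Theorem \ref{thm:met_comp} is the same radial model function bounded in Theorem \ref{thmmain}, which means matching the variables $s(x)$ and $r_\Sigma(x)$. One must also ensure $r_\Sigma$ is well behaved in the tube, and this is what the assumption $\delta\le\mathbb{C}_n$ provides. The remaining concern is the edge case $\delta=\pi/2$ (or larger), which is handled by the monotonicity reduction above.
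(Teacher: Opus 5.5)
Your proposal is correct and follows the paper's implicit argument. The paper gives no separate proof: it calls the corollary an immediate consequence of specializing Theorem \ref{thm:met_comp} to $M=\mathbb{S}^n$, where $\mathrm{Ric}=n-1$, to get \eqref{ineq1}, and then combining with Theorem \ref{thmmain} i) to get \eqref{ineq2}. Your extra steps — reducing $\delta\ge\pi/2$ via monotonicity of the MET under domain inclusion, and fixing a point at distance $r_0\in(0,\delta)$ from $\Sigma$ to evaluate $F_{\delta,n}(r_0)\to\infty$ — simply make explicit details the paper leaves unstated.
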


\subsection{Schwarz Symmetrization}\
\medskip

We present in this subsection a notion of \emph{ relative Schwarz-symmetrization} adapted to the case given by a closed manifold.

\begin{definition}\label{def:Symmcompact}\

	Let $(M^n, g)$ be a closed Riemannian manifold. Suppose that $D \subseteq M$ is a precompact open connected domain in $M^n$. Then the \emph{relative $\mathbb{S}^n$- symmetrization of \, $D$} is denoted by $D^{*}$ and is defined to be the unique $s(D)$-ball in $\mathbb{S}^n$,  $D^{*}:=B^{\mathbb{S}^n}_{s(D)}(p_N)$, (where we fix $p_N$ the north-pole of $\mathbb{S}^n$), having the same relative volume:
	
	\begin{equation*}
		\Vrel^{M}(D)=\frac{\Vol(D)}{\Vol(M)}=\frac{\Vol\left(B_{s(D)}\right)}{\Vol(\mathbb{S}^n)}=\Vrel^{\mathbb{S}^n}(B^{\mathbb{S}^n}_{s(D)})
	\end{equation*}
	\end{definition}
Once we have defined the symmetrization of a domain, we are going to define the Schwarz symmetrization of a function.  To do that, we start defining the relative volume of the super-level sets $D(t) := \{ x \in D : u(x) \ge t \}$ as
    \begin{equation}
        \mu_M(t) := \Vrel^{M}(D(t)) = \frac{\Vol(\{ x \in D : u(x) \ge t \})}{\Vol(M)}.
    \end{equation}

Once this quantity $\mu_M(t)$, (called the {\em distribution function}), is defined for each $t \in (0,  \operatorname{ess\,sup}_D u)$, we define the {\em decreasing arrangement} of $u$ as

\begin{definition}[Decreasing rearrangement]\label{decrea}\

Define $u^\sharp : [0, \Vrel(D)] \to [0, \infty)$ by
    \begin{equation}
        u^\sharp(s) := 
        \begin{cases}
            \operatorname{ess\,sup}_D u & \text{if } s = 0, \\
            \inf \{ t \ge 0 : \mu_M(t) \le s \} & \text{if } s > 0.
        \end{cases}
    \end{equation}
    
     Note that, by its definition, $u^\sharp$ is non-increasing and attains its maximum at $s=0$, 
$$max_{[0, \Vrel(D)]} u^\sharp=u^\sharp(0)=\operatorname{ess\,sup}_D u $$
\end{definition}

Then, the Schwarz symmetrization of the function $u: D \rightarrow \erre^+$ is defined in the following way:
\begin{definition}[Schwarz-Symmetrization of a Function]
Let $u : D \to [0, \infty)$ be a nonnegative measurable function on $D \subseteq M$. Its $\mathbb{S}^n$-Schwarz symmetrization $u^* : D^* \to [0, \infty)$ is defined as the unique radial non-increasing function with respect to $r_{p_N}(x^*) = \text{dist}_{\mathbb{S}^n}(x^*, p_N)$ given by
    \begin{equation}
        u^*(x^*) = u^\sharp \left( \frac{\Vol(B_{r_{p_N}(x^*)}^{\mathbb{S}^n}(p_N))}{\Vol(\mathbb{S}^n)} \right)=u^\sharp(\Vrel^{\mathbb{S}^n}(B_{r_{p_N}(x^*)}^{\mathbb{S}^n}(p_N)).
    \end{equation}
\end{definition}
\begin{remark}
Equivalently, $u^*$ is the unique radial, non-increasing function on $D^*$ satisfying
\begin{equation}
    \Vrel^{\mathbb{S}^n}(\{ u^* \ge t \}) = \Vrel^M(D(t)) \quad \forall t \ge 0.
\end{equation}
\end{remark}

\begin{proposition}[Properties of Schwarz Symmetrization]\

The Schwarz symmetrization of a function satisfies the following properties:
\begin{enumerate}
    \item $u^*$ is radial and non-increasing with respect to the distance from the target pole $p_N$.
    \item Symmetrization preserves relative $L^1$ integrals:
    \begin{equation}
        \frac{1}{\Vol(M)} \int_D u \, d\sigma = \frac{1}{\Vol(\mathbb{S}^n)} \int_{D^*} u^* \, d\tilde{\sigma}.
    \end{equation}
    \item For any two smooth nonnegative functions $u_1, u_2$ on $D$,
    \begin{equation}
        u_1 \le u_2 \implies u_1^* \le u_2^*.
    \end{equation}
\end{enumerate}
\end{proposition}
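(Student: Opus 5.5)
The plan is to reduce all three properties to statements about the distribution function $\mu_M$ and the decreasing rearrangement $u^\sharp$ of Definition \ref{decrea}. The bridge between $D$ and $D^*$ is the map $r\mapsto \Vrel^{\mathbb{S}^n}(B^{\mathbb{S}^n}_r(p_N))$. This map is continuous and strictly increasing on $[0,\pi]$, since the ball volume is $\int_0^r \sin^{n-1}(t)\,dt$ times a constant.

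\emph{Property (1).} First I check that $u^\sharp$ is non-increasing. If $0<s_1\le s_2$, then
\[
\{t\ge 0:\mu_M(t)\le s_1\}\subseteq\{t\ge0:\mu_M(t)\le s_2\},
\]
so the infimum defining $u^\sharp(s_2)$ is taken over a larger set and $u^\sharp(s_2)\le u^\sharp(s_1)$. Comparison with $s=0$ uses $u^\sharp(s)\le \operatorname{ess\,sup}_D u$. Indeed, for every $t>\operatorname{ess\,sup}_D u$ the set $D(t)$ is null, hence $\mu_M(t)=0\le s$. Now $u^*(x^*)$ depends only on $r_{p_N}(x^*)$, so it is radial. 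It is the composition of the non-increasing $u^\sharp$ with the increasing map $r\mapsto\Vrel^{\mathbb{S}^n}(B_r(p_N))$, so it is non-increasing in $r_{p_N}$.

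\emph{Property (3).} This is also formal. If $u_1\le u_2$ on $D$, then $D_1(t)=\{u_1\ge t\}\subseteq D_2(t)=\{u_2\ge t\}$ for every $t$, so $\mu_1(t)\le\mu_2(t)$. Hence
\[
\{t:\mu_2(t)\le s\}\subseteq\{t:\mu_1(t)\le s\},
\]
which gives $u_1^\sharp(s)\le u_2^\sharp(s)$ for $s>0$. For $s=0$ the inequality is monotonicity of the essential supremum. Composing with the same radial volume map yields $u_1^*\le u_2^*$ pointwise on $D^*$. Note that $D^*$ depends only on $D$, not on $u_i$.

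\emph{Property (2) and the main obstacle.} The key step is \emph{equimeasurability}: $\Vrel^{\mathbb{S}^n}(\{u^*\ge t\})=\mu_M(t)$ for a.e.\ $t>0$. Granting it, the layer-cake formula on each side gives the result:
\[
\frac{1}{\Vol(M)}\int_D u\,d\sigma=\int_0^\infty \mu_M(t)\,dt=\int_0^\infty \Vrel^{\mathbb{S}^n}(\{u^*\ge t\})\,dt=\frac{1}{\Vol(\mathbb{S}^n)}\int_{D^*}u^*\,d\tilde\sigma .
\]
To prove equimeasurability I will use that $\mu_M$ is non-increasing and left-continuous in $t$. Left-continuity holds because $\{u\ge t\}=\bigcap_{t'<t}\{u\ge t'\}$ and the measure is finite. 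From this, for $s>0$,
\[
u^\sharp(s)\ge t \iff \mu_M(t')>s \text{ for all } t'<t ,
\]
which, away from the countably many jump points of $\mu_M$, is equivalent to $s<\mu_M(t)$ up to the single endpoint $s=\mu_M(t)$. Therefore $\{s: u^\sharp(s)\ge t\}$ is an interval $[0,\mu_M(t))$ or $[0,\mu_M(t)]$. Pulling back by the volume map, $\{u^*\ge t\}$ is a closed or open geodesic ball about $p_N$ of relative volume exactly $\mu_M(t)$, because geodesic spheres have measure zero. The only delicacy I expect is this endpoint and jump bookkeeping: the plateaus of $u$, where $\mu_M$ jumps, and the convention $u^\sharp(0)=\operatorname{ess\,sup}u$. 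These affect only a null set of $t$, or a null set of $x^*$, and so do not change the integrals.
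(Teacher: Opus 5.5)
Your proof is correct. The paper gives no proof of this proposition. It lists (1)--(3) as standard facts, and in the remark just before it simply asserts the equimeasurability $\Vrel^{\mathbb{S}^n}(\{u^*\ge t\})=\Vrel^M(D(t))$ for all $t\ge 0$. Your proposal supplies the missing argument directly from Definition~\ref{decrea}:
\begin{enumerate}
\item Properties (1) and (3) follow purely from the order structure in the definition of $u^\sharp$. They therefore hold for arbitrary nonnegative measurable functions, and the smoothness hypothesis in (3) is not needed.
\item Property (2) reduces, via the layer-cake formula on both sides, to equimeasurability. You derive that from the left-continuity of $\mu_M$, which is exactly the point the paper leaves unproved.
\end{enumerate}
One simplification is available. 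You do not need to set aside the jump points of $\mu_M$. Because $\mu_M$ is non-increasing and left-continuous at every $t$, jump points included, $\inf_{t'<t}\mu_M(t')=\mu_M(t)$ for every $t>0$. Hence $\{s: u^\sharp(s)\ge t\}$ is $[0,\mu_M(t))$ or $[0,\mu_M(t)]$ for every $t>0$. This recovers the paper's remark for all $t$, not just almost every $t$, although the a.e.\ version you state already suffices for the integral identity (2).
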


There is another property which deserves to be highlited because will play a key r\^{o}le in the proof of the main theorem.

\begin{proposition}\label{maximums}\

Let $(M,g)$ be a compact Riemannian manifold of dimension $n$ without boundary ($\partial M = \emptyset$), and let $D \subset M$ be a bounded open domain with smooth boundary $\partial D$. Let $E_D \in C^2(\overline{D})$ be the unique classical solution to the Poisson problem
$$
\begin{cases}
-\Delta_g E_D = 1 & \text{in } D, \\
E_D = 0 & \text{on } \partial D.
\end{cases}
$$
Let $E_D^*$ be the Schwarz symmetrization of $E_D$ on the unit sphere $\mathbb{S}^n(1)$, constructed via the relative volume measure. Then,
$$
\max_{x \in \overline{D}} E_D(x) = \max_{x \in \overline{D}^*} E_D^*(x).
$$
\end{proposition}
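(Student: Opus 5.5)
The identity follows from the definition of the decreasing rearrangement together with the fact that $E_D$ is continuous and attains its supremum. My plan has three steps: check that $E_D$ has a genuine positive maximum equal to its essential supremum; show that $u^\sharp(0)$ is the maximum of $E_D^*$ on $\overline{D}^*$; and combine the two.

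First, $E_D \in C^2(\overline{D})$ and $\overline{D}$ is compact, so $E_D$ attains its maximum on $\overline{D}$. Since $-\Delta_g E_D = 1 > 0$ in $D$ and $E_D = 0$ on $\partial D$, the weak minimum principle gives $E_D \ge 0$, and in fact $E_D > 0$ in $D$. Hence $E_D$ is a nonnegative function on $D$, as the symmetrization requires, and its maximum $m := \max_{\overline{D}} E_D$ is positive and attained at an interior point. By continuity, for every $t < m$ the set $\{E_D > t\}$ is a nonempty open subset of $D$, so it has positive measure. Therefore $\operatorname{ess\,sup}_D E_D = m$: the essential supremum coincides with the pointwise maximum. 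This is the one place where some care is needed, since without continuity the essential supremum could be strictly smaller than the supremum.

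Second, I compute the maximum of $E_D^*$. By Definition \ref{decrea}, $u^\sharp(0) = \operatorname{ess\,sup}_D E_D = m$, and $u^\sharp$ is non-increasing on $[0,\Vrel(D)]$. The function $E_D^*(x^*) = u^\sharp\big(\Vrel^{\mathbb{S}^n}(B_{r_{p_N}(x^*)}(p_N))\big)$ has argument $0$ exactly at $x^* = p_N$, and $p_N \in D^* \subset \overline{D}^*$. Monotonicity then gives $E_D^*(x^*) \le u^\sharp(0) = E_D^*(p_N)$ for every $x^*$. So $\max_{\overline{D}^*} E_D^* = u^\sharp(0)$, where $E_D^*$ is extended to the boundary sphere by the same formula, which only affects values that are at most $u^\sharp(0)$ anyway.

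Combining the two steps gives $\max_{\overline{D}} E_D = m = u^\sharp(0) = \max_{\overline{D}^*} E_D^*$. As a sanity check, the same equality can be read off the level-set characterization $\Vrel^{\mathbb{S}^n}(\{E_D^* \ge t\}) = \Vrel^M(D(t))$. For $t > m$ both sides vanish. For $t < m$ both sides are positive, because $\{E_D > t\}$ is a nonempty open set. Hence $\sup E_D^* = m$, and this value is attained at the pole.
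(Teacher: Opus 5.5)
Your proposal is correct and follows essentially the same route as the paper. It uses compactness, the maximum principle and continuity to show that the essential supremum of $E_D$ equals its attained positive maximum, so that $E_D^\sharp(0)=\max_{\overline{D}}E_D$. It then uses monotonicity of $E_D^\sharp$, composed with the increasing relative-volume function, to see that $E_D^*$ attains this value at the pole $p_N$. Your level-set sanity check is a harmless extra not present in the paper.
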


\begin{proof}


To start the proof, and as  the data function $f \equiv 1$ is smooth ($C^\infty(D)$) and $\partial D$ is of class $C^\infty$, standard boundary elliptic regularity ensures that $E_D \in C^\infty(D) \cap C^2(\overline{D})$. 

Since $\overline{D}$ is compact and $E_D$ is continuous, $E_D$ attains its maximum on $\overline{D}$. Furthermore, since $-\Delta_g E_D = 1 > 0$ in $D$ and $\left.E_D\right|_{\partial D} = 0$, the Strong Maximum Principle implies that $E_D(x) > 0$ for all $x \in D$ and that $E_D$ achieves its maximum at an interior point $x_0 \in D$. Define
$$
M_0 := \max_{x \in \overline{D}} E_D(x) = E_D(x_0) > 0.
$$


By definition of the decreasing rearrangement, (see Definition \ref{decrea}), $E_D^\sharp(0)$ represents the essential supremum of $E_D$ on $D$:
$$
E_D^\sharp(0) = \mathrm{ess\,sup}_{x \in D} E_D(x).
$$

To show that $E_D^\sharp(0) = M_0$, fix any $\varepsilon > 0$ and consider the superlevel set:
$$
\Omega_{M_0 - \varepsilon} = \{ x \in D : E_D(x) > M_0 - \varepsilon \}.
$$
Since $E_D$ is continuous at $x_0$ and $E_D(x_0) = M_0$, there exists an open neighborhood $U \subset D$ of $x_0$ such that $U \subseteq \Omega_{M_0 - \varepsilon}$. On a Riemannian manifold, every non-empty open set has strictly positive volume measure, which implies:
$$
\mu_M\left( \Omega_{M_0 - \varepsilon} \right) \ge \mu_M(U) > 0.
$$
Consequently, for any $0<M_0-\epsilon=t < M_0$, the measure $\mu_M(\{ x \in D : E_D(x) > t \})$ is strictly positive. Therefore, by definition of $M_0$, we have that
$\inf \{ t \ge 0 : \mu_M(\{ E_D > t \}) = 0 \} = M_0$, establishing that:
\begin{equation}\label{eqt1}
E_D^\sharp(0) = M_0 = \max_{x \in \overline{D}} E_D(x).
\end{equation}


Now, let us fix the North pole $p_N \in \mathbb{S}^n$. Let $v : [0, \pi] \to [0, 1]$ be the relative volume function of geodesic balls in $\mathbb{S}^n$:
$$
v(r) = \frac{\mathrm{Vol}_{\mathbb{S}^n}(B_r(p_0))}{\mathrm{Vol}(\mathbb{S}^n)} = \frac{\int_0^r \sin^{n-1}(\theta) \, d\theta}{\int_0^\pi \sin^{n-1}(\theta) \, d\theta}.
$$
The function $v(r)$ is continuous, strictly increasing, and satisfies $v(0) = 0$. The symmetrized domain $D^* \subset \mathbb{S}^n$ is the geodesic ball $B_{R^*}(p_N)$ with radius $R^* = v^{-1}\left(\mu_M(D)\right)$. 

The Schwarz symmetrization $E_D^* : \mathbb{S}^n \to [0, \infty)$ is defined by:
$$
E_D^*(x) = E_D^\sharp \left( v(r_{p_{N}}(x)) \right).
$$

Because $E_D^\sharp$ is non-increasing on $[0, \mu_M(D)]$ and $v(r)$ is strictly increasing in $r = r_{p_{N}}$, the composite function $r \mapsto E_D^\sharp(v(r))$ achieves its maximum when $r$ is minimized, which occurs at $r = 0$ (the pole $x = p_N$):
\begin{equation}\label{eqt2}
\max_{x \in \overline{D}^*} E_D^*(x) = \max_{[0,\Vrel^m(D)]}(E^\sharp \circ v)=E_D^*(p_N) = (E_D^\sharp\circ v)(0)=E_D^\sharp(v(0)) = E_D^\sharp(0).
\end{equation}


Combining the equalities established in \eqref{eqt1} and \eqref{eqt2}, we obtain
$$
\max_{x \in \overline{D}^*} E_D^*(x) = E_D^\sharp(0) = \max_{x \in \overline{D}} E_D(x).
$$
\end{proof}

A key result for comparing Poisson solutions under symmetrization is the Talenti--Szeg\H{o} Comparison Theorem on closed Riemannian manifolds stablished by D. Colladay, L. Langford and P. McDonald in \cite{ColladayLangfordMcDonald2018}:

\begin{theorem}[Talenti--Szeg\H{o} Comparison Theorem \cite{ColladayLangfordMcDonald2018}] \label{thm:talenti}\

Let $(M^n, g)$ be a closed Riemannian manifold satisfying $\text{Ric}_M \ge n-1$. Consider the Poisson problem on $D \subseteq M$ with $f \in L^2(D)$:
\begin{equation}
    \begin{cases}
        \Delta^M u + f = 0 & \text{in } D, \\
        u|_{\partial D} = 0,
    \end{cases}
\end{equation}
and its corresponding symmetrized problem on the geodesic ball $D^* \subseteq \mathbb{S}^n$:
\begin{equation}
    \begin{cases}
        \Delta^{\mathbb{S}^n} v + f^* = 0 & \text{in } D^*, \\
        v|_{\partial D^*} = 0.
    \end{cases}
\end{equation}
Then, the rearranged solution $u^*$ satisfies pointwise:
\begin{equation}
    u^* \le v \quad \text{on } D^*.
\end{equation}
\end{theorem}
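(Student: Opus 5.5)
The plan is to run Talenti's classical level-set argument. The Euclidean isoperimetric inequality is replaced by the Lévy--Gromov isoperimetric inequality, which is available because $\text{Ric}_M \ge n-1$. It holds in \emph{relative} form: for every domain $\Omega\subseteq M$,
\begin{equation*}
\frac{\Vol(\partial\Omega)}{\Vol(M)} \ge I\big(\Vrel^M(\Omega)\big),
\end{equation*}
where $I$ is the isoperimetric profile of $\mathbb{S}^n$ written in relative volume. This relative normalization is exactly the one used in Definition \ref{def:Symmcompact}, which is why the symmetrized problem lives on a spherical cap of the same relative volume.

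I will first reduce to $f\ge 0$. In that case the maximum principle gives $u\ge 0$, and $u^*$ makes sense as in the symmetrization definitions above. For general $f$ I would compare $|u|$ with the solution for $|f|$, and observe that $|f|^*=f^*$ in the rearrangement sense.

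For $t>0$ set $\mu(t)=\Vrel^M(\{u>t\})$.
\begin{enumerate}
\item Integrating $\Delta^M u+f=0$ over $\{u>t\}$ and using the divergence theorem gives
\begin{equation*}
\int_{\{u=t\}}|\nabla u| = \int_{\{u>t\}} f \le \Vol(M)\int_0^{\mu(t)} f^\sharp(s)\,ds =: \Vol(M)\,F(\mu(t)).
\end{equation*}
The inequality is the Hardy--Littlewood rearrangement inequality.
\item The coarea formula gives $-\mu'(t)\Vol(M)\ge \int_{\{u=t\}}|\nabla u|^{-1}$ for a.e.\ $t$.
\item Cauchy--Schwarz gives
\begin{equation*}
\Vol(\{u=t\})^2\le \Big(\int_{\{u=t\}}|\nabla u|^{-1}\Big)\Big(\int_{\{u=t\}}|\nabla u|\Big).
\end{equation*}
\item Lévy--Gromov bounds the left-hand side of step 3 from below by $\Vol(M)^2 I(\mu(t))^2$.
\end{enumerate}
Combining steps 1--4 yields the differential inequality $1\le -\mu'(t)\,F(\mu(t))/I(\mu(t))^2$. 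Integrating in $t$ from $0$ to $t$ gives $t\le \int_{\mu(t)}^{\Vrel(D)} F(\sigma)I(\sigma)^{-2}\,d\sigma$, and inverting this,
\begin{equation*}
u^\sharp(s)\le \int_s^{\Vrel(D)}\frac{F(\sigma)}{I(\sigma)^2}\,d\sigma .
\end{equation*}

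Next I treat the model problem on $D^*$. By uniqueness and rotational symmetry, $v$ is radial. Since $f^*$ is radial and non-increasing, integrating the radial ODE shows that $v$ is also non-increasing in $r_{p_N}$. Its level sets are therefore geodesic spheres about $p_N$, on which $|\nabla v|$ is constant and the isoperimetric profile is attained. Moreover, $\{v>t\}$ is a ball about $p_N$ on which $\int f^*$ equals $\Vol(\mathbb{S}^n)F$ of its relative volume. Hence every inequality in steps 1--4 is an equality for $v$, and
\begin{equation*}
v^\sharp(s)=\int_s^{\Vrel(D)} F(\sigma)I(\sigma)^{-2}\,d\sigma .
\end{equation*}
Comparing this with the bound on $u^\sharp$ and composing with $s=\Vrel^{\mathbb{S}^n}(B_{r_{p_N}(x^*)}(p_N))$ gives $u^*\le v^*=v$ on $D^*$.

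The main obstacle is making steps 1--3 rigorous for a.e.\ $t$ and justifying the integration of the differential inequality. The difficulties are that $\mu$ is only monotone, that there may be critical levels, and that there may be plateaus where $\nabla u=0$ on a set of positive measure. I would handle these in the standard way. First, write $\mu$ as the sum of an absolutely continuous part and a singular part, the singular part only helping the inequality. Second, use Sard's theorem, which applies since $u$ is smooth in the interior, so a.e.\ level is regular. Third, work with the function $t\mapsto\int_{\{u>t\}}|\nabla u|^2$ to justify the coarea step. A secondary point is the equality case for $v$: this needs $v$ to be genuinely radially decreasing, which follows from the radial ODE together with $f^*\ge 0$.
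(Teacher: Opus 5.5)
The paper gives no proof of its own and simply imports this result from Colladay--Langford--McDonald. Their argument is the one you give: Talenti's level-set method, with the isoperimetric step supplied by the L\'evy--Gromov inequality in relative-volume form, and equality throughout for the radially decreasing model solution on $D^*$, so your proposal is correct. The one inaccuracy is your claim that $u$ is smooth in the interior when only $f\in L^2(D)$ is assumed: it is merely $W^{2,2}_{\mathrm{loc}}$, so Sard is unavailable. This is harmless, because Talenti's truncation identity $-\frac{d}{dt}\int_{\{u>t\}}|\nabla u|^2=\int_{\{u>t\}}f$ combined with De Giorgi perimeters needs no regularity, and the paper itself only uses $f\equiv 1$.
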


Now, we are going to apply Theorem \ref{thm:talenti} to the Mean Exit Time problem defined on a tube $T_\delta(\Sigma^{n-1})$, around a closed minimal hypersurface $\Sigma^{n-1} \subseteq \mathbb{S}^n$. In this case, the data function is $f=1$, so the symmetrized problem in the sphere is posed in a geodesic ball $D^* = B_{r(\delta)}^{\mathbb{S}^n} \subseteq \mathbb{S}^n$ with the same relative volume than $T_\delta(\Sigma^{n-1})$, and, as $f^*=1$, its solution is the mean exit time function on this geodesic ball $D^* = B_{r(\delta)}^{\mathbb{S}^n}$. Hence, we have the following upper bound:

\begin{corollary}[Upper Bound on Minimal Tubes] \label{cor:upper_bound_met}\

Let $\Sigma^{n-1} \subseteq \mathbb{S}^n$ be a closed minimal hypersurface, and let $E_{T_\delta(\Sigma^{n-1})}$ be the mean exit time function of the tube $T_\delta(\Sigma^{n-1})$, with $\delta \leq \mathbb{C}_n$. Let $D^* = B_{r_n(\delta)}^{\mathbb{S}^n} \subseteq \mathbb{S}^n$ be the geodesic ball with the same (relative) volume as $T_\delta(\Sigma^{n-1})$. Then:
\begin{equation}
    E_{T_\delta(\Sigma^{n-1})}^* \le E_{B_{r_{n}(\delta)}^{\mathbb{S}^n}}.
\end{equation}

Now, let us consider a family of closed and minimal isometric immersions $\mathcal{F}=\{\varphi_n:\Sigma^{n-1} \rightarrow \mathbb{S}^n\}_{n\in \ene}$, and let us assume that the number $\mathbb{C}(\mathcal{F}):=inf_{n \in \ene} \{\mathbb{C}_n\} > 0$.  Then, if we assume, given $\delta \leq \mathbb{C}(\mathcal{F})$, that, for all $n \in \ene$, we have
 $$\Vol(T_\delta(\Sigma^{n-1})) \le \frac{1}{2} \Vol(\mathbb{S}^n)$$
 \noindent we conclude that the symmetrized ball $B_{r_{n}(\delta)}^{\mathbb{S}^n}$ is contained in a polar hemisphere, and
\begin{equation}\label{tozero}
    E_{T_\delta(\Sigma^{n-1})}^* \le E_{B_{r(\delta)}^{\mathbb{S}^n}} \longrightarrow 0 \quad \text{as } n \to \infty.
\end{equation}
\end{corollary}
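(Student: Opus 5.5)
The first inequality comes from the Talenti--Szeg\H{o} comparison (Theorem \ref{thm:talenti}); the limit \eqref{tozero} comes from monotonicity of the mean exit time under inclusion of domains together with Theorem \ref{thmmain} ii).

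\emph{Step 1: the pointwise bound.} Apply Theorem \ref{thm:talenti} with $M=\mathbb{S}^n$, so that $\mathrm{Ric}=n-1$. Take $D=T_\delta(\Sigma^{n-1})$ and $f\equiv 1$. Since $\delta\le\mathbb{C}_n$, the tube is a domain with (at least Lipschitz) boundary on which the Poisson problem \eqref{eq:poisson_met} is well posed, and its solution is $u=E_{T_\delta(\Sigma^{n-1})}$. Because $f\equiv1$ is constant, its decreasing rearrangement is again constant, so $f^*\equiv 1$ on $D^*=B^{\mathbb{S}^n}_{r_n(\delta)}(p_N)$. The symmetrized problem is then exactly \eqref{eq:poisson_met} on that ball, whose solution is $v=E_{B_{r_n(\delta)}^{\mathbb{S}^n}}$. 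Theorem \ref{thm:talenti} now gives $E^*_{T_\delta(\Sigma^{n-1})}\le E_{B_{r_n(\delta)}^{\mathbb{S}^n}}$ pointwise on $D^*$.

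\emph{Step 2: locating the symmetrized ball.} Assume $\Vol(T_\delta(\Sigma^{n-1}))\le\frac12\Vol(\mathbb{S}^n)$. By Definition \ref{def:Symmcompact}, $v(r_n(\delta))\le \frac12$. Since $v$ is strictly increasing and $v(\pi/2)=\frac12$, it follows that $r_n(\delta)\le\pi/2$, so the ball lies in the closed northern hemisphere.

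This is not yet enough for Theorem \ref{thmmain} ii): that result bounds the exit time of $B_{\pi/2-\delta}$, and the bound $G_{\delta,n}$ blows up as the radius approaches $\pi/2$. So Step 2 must be sharpened to a uniform estimate $r_n(\delta)\le\pi/2-\delta'$ for some fixed $\delta'>0$. I would try to get it from the tube geometry, either by a uniform margin in the volume hypothesis or by a lower bound on $\Vol(\mathbb{S}^n\setminus T_\delta)$. If no such margin is available, the argument would read the hypothesis in the form used in the main theorem, where it is applied to the tube of radius $\delta$ and compared with the polar ball $B_{\pi/2-\delta}(p_N)$. Concretely: if $\Vol(T_\delta)\le\Vol(B_{\pi/2-\delta})$, then $r_n(\delta)\le\pi/2-\delta$.

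\emph{Step 3: passing to the limit.} Once $r_n(\delta)\le \pi/2-\delta$, use domain monotonicity of the mean exit time. If $B\subset B'$, then $E_{B'}-E_B$ is harmonic on $B$ and nonnegative on $\partial B$, so the maximum principle gives $E_B\le E_{B'}$. Combining with Theorem \ref{thmmain} ii),
\[
E^*_{T_\delta(\Sigma^{n-1})}\le E_{B_{r_n(\delta)}}\le E_{B_{\pi/2-\delta}}\le G_{\delta,n}=\frac{1-\sin\delta}{n\sin\delta}\longrightarrow 0 .
\]
This bound is uniform in $n$ and tends to $0$, which yields \eqref{tozero}. Together with Proposition \ref{maximums}, it also gives $\max E_{T_\delta(\Sigma^{n-1})}\to0$.

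The main obstacle is the quantitative containment in Step 2. Knowing only that the ball lies in a hemisphere does not make the exit time vanish uniformly, so one needs radius at most $\pi/2-\delta$, not merely at most $\pi/2$.
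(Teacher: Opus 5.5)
Your Step 1 is exactly the paper's argument (Talenti--Szeg\H{o} with $f\equiv f^*\equiv 1$), and your observation that $r_n(\delta)\le\pi/2$ matches the paper too. The paper then simply cites Theorem \ref{thmmain} ii), and you are right that, with a fixed $\delta$, that bound does not literally cover a ball whose radius can approach $\pi/2$.

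Your resolution, however, leaves a genuine gap. You prove \eqref{tozero} only under the stronger hypothesis $\Vol(T_\delta(\Sigma^{n-1}))\le\Vol(B_{\pi/2-\delta}(p_N))$, which is not what the corollary assumes. This is not a harmless rewording. The main theorem argues by contradiction from $\Vol(T_\delta(\Sigma^{m_n-1}))<\frac12\Vol(\mathbb{S}^{m_n})$. Negating your version would only give $\Vol(T_\delta)>\Vol(B_{\pi/2-\delta})$ for large $n$, and that is not enough to invoke Theorem \ref{measurecon}. Moreover, your stated reason for the change is false: containment in a hemisphere does make the exit time vanish.

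The missing idea is that the mean exit time of the hemisphere itself tends to zero. By the domain monotonicity you already have, $E_{B_{r_n(\delta)}}\le E_{B_{\pi/2}(p_N)}$. For a polar ball of radius $R$ the radial solution is $E_{B_R}(t)=\int_t^R q_n(s)\,ds$, so
\[
\max E_{B_{\pi/2}(p_N)}=\int_0^{\pi/2}q_n(t)\,dt,\qquad q_n(t)=\frac{1}{\sin^{n-1}t}\int_0^t\sin^{n-1}s\,ds .
\]
For $0\le s\le t\le\pi/2$ you have $\sin s\le \sin t$ and $\cos s\ge\cos t$. This gives $\int_0^t\sin^{n-1}s\,ds\le t\sin^{n-1}t$ and $\int_0^t\sin^{n-1}s\,ds\le \frac{\sin^n t}{n\cos t}$, hence $q_n(t)\le\min\{t,\tan t/n\}$. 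Splitting the integral at $\pi/2-1/n$ and using $\sin x\ge 2x/\pi$ yields
\[
\max E_{B_{\pi/2}(p_N)}\le\frac{\log\bigl(1/\sin(1/n)\bigr)}{n}+\frac{\pi}{2n}\le\frac{\log(\pi n/2)+\pi/2}{n}\longrightarrow 0 .
\]
Equivalently, you can apply Theorem \ref{thmmain} ii) with $\delta_n=n^{-1/2}$ and bound the contribution of the thin shell $[\pi/2-\delta_n,\pi/2]$ using $q_n\le\pi/2$. With this estimate the stated hypothesis $\Vol(T_\delta(\Sigma^{n-1}))\le\frac12\Vol(\mathbb{S}^n)$ suffices, and no margin $\delta'$ is needed.
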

\begin{proof}
Let us fix $\delta \leq \mathbb{C}(\mathcal{F})$.  As we assume that $\Vol(T_\delta(\Sigma^{n-1})) \le \frac{1}{2} \Vol(\mathbb{S}^n)$ for all $n \in \ene$, and $B_{r_n(\delta)}^{\mathbb{S}^n} \subseteq \mathbb{S}^n$ is the geodesic ball with the same (relative) volume as $T_\delta(\Sigma^{n-1})$ then $r_{n}(\delta) \le \pi/2\,\,\forall n \in \ene$. 

Now, to obtain inequality $E_{T_\delta(\Sigma^{n-1})}^* \le E_{B_{r_{n}(\delta)}^{\mathbb{S}^n}}$, we apply Theorem \ref{thm:talenti}, and to get conclusion \eqref{tozero}, we apply assertion ii) of Theorem \ref{thmmain}.
\end{proof}

\section{Measure Concentration Around Minimal Hypersurfaces}\
\medskip

We now present the restatement and complete proof of our main result.

\begin{theorem}[Main Theorem]\label{thm_main}\

Let $\mathcal{F}=\{\varphi_n:\Sigma^{n-1} \rightarrow \mathbb{S}^n\}_{n\in \ene}$ be a family of closed and minimal isometric immersions, and let us assume that the minimal focal distance associated to the family is
$$\mathbb{C}(\mathcal{F}):=\inf_{n \in \ene} \{\mathbb{C}_n\} > 0$$

Then, given $\delta \leq \mathbb{C}(\mathcal{F})$, and given $\epsilon \in (0, \pi/2 - \delta)$, there exists $n_\delta \in \ene$ such that, if $n \geq _\delta$, we have
\begin{equation} \label{eq:main_bound}
    \Vol(T_{\delta+\epsilon}(\Sigma^{n-1})) \ge \left( 1 - \sqrt{\frac{\pi}{8}} e^{-\epsilon^2 (n-1)/2} \right) \Vol(\mathbb{S}^n).
\end{equation}
Hence, for this familiy of closed and minimal immersions, we have that
\begin{equation}
    \lim_{n \to \infty} \frac{\Vol(T_r(\Sigma^{n-1}))}{\Vol(\mathbb{S}^n)} = 1.
\end{equation}
\end{theorem}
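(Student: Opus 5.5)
The argument runs by contradiction through the mean exit time, and then uses Theorem \ref{measurecon}. Fix $\delta \leq \mathbb{C}(\mathcal{F})$ and $\epsilon \in (0,\pi/2-\delta)$. The first goal is to show that there is $n_\delta$ such that
$$\Vol(T_\delta(\Sigma^{n-1})) \ge \tfrac12 \Vol(\mathbb{S}^n) \quad \text{for all } n\ge n_\delta.$$
Once this holds, we take $A = T_\delta(\Sigma^{n-1})$ in Theorem \ref{measurecon}. The $\epsilon$-fattening of $T_\delta(\Sigma^{n-1})$ is contained in $T_{\delta+\epsilon}(\Sigma^{n-1})$, by the triangle inequality for $\mathrm{dist}(\cdot,\Sigma^{n-1})$: any point within $\epsilon$ of a point at distance $\le\delta$ from $\Sigma^{n-1}$ lies at distance $<\delta+\epsilon$. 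So $\Vol(T_{\delta+\epsilon}(\Sigma^{n-1}))\ge \Vol(A_\epsilon)$, and this gives \eqref{eq:main_bound} directly. The limit statement then follows for any fixed $r\in(0,\pi/2)$ with $r > \delta$ by writing $r=\delta+\epsilon$ and letting $n\to\infty$. If $r$ is smaller than the chosen $\delta$, we replace $\delta$ by a smaller admissible value (any $\delta'<\min(r,\mathbb{C}(\mathcal{F}))$ works, since the argument only needs $\delta'\le \mathbb{C}(\mathcal{F})$) and set $\epsilon=r-\delta'$.

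To get the half-volume bound, suppose it fails. Then there are infinitely many $n$, say a subsequence $n_k\to\infty$, with $\Vol(T_\delta(\Sigma^{n_k-1}))<\frac12\Vol(\mathbb{S}^{n_k})$. Along this subsequence the symmetrized ball $B_{r_{n_k}(\delta)}$ has radius at most $\pi/2$. The proof of Corollary \ref{cor:upper_bound_met} only uses the volume hypothesis for the indices at hand, so it applies along the subsequence and gives
$$E^*_{T_\delta(\Sigma^{n_k-1})}\le E_{B_{r_{n_k}(\delta)}}.$$
By Proposition \ref{maximums}, $\max E_{T_\delta(\Sigma)}=\max E^*_{T_\delta(\Sigma)}$, so $\max E_{T_\delta(\Sigma^{n_k-1})}\le \max E_{B_{r_{n_k}(\delta)}}$. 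On the other hand, Corollary \ref{cor:met_divergence} gives $\max E_{T_\delta(\Sigma^{n_k-1})}\to\infty$, which is the contradiction we want, provided the right-hand side stays bounded.

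That boundedness is the main obstacle. Theorem \ref{thmmain} ii) controls the exit time only on the ball of radius $\pi/2-\delta$, not on an arbitrary ball of radius $\le \pi/2$, and on the hemisphere itself $G$ degenerates because $\cos(\pi/2)=0$. I would try first to redo the explicit radial computation for a ball $B_R$ with $R\le\pi/2$. The radial solution satisfies
$$E'(t)=-\frac{\int_0^t\sin^{n-1}}{\sin^{n-1}t},$$
and the key point is the estimate $\int_0^t\sin^{n-1}s\,ds\le \sin^{n-1}t\cdot\frac{\sin t}{(n-1)\cos t}$ for $t$ away from $\pi/2$, which makes the maximum of $E$ of order $1/n$. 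Near $t=\pi/2$ one instead uses $\int_0^t\sin^{n-1}\le \sqrt{\pi/(2n)}$, so the contribution of a thin collar near the equator is $O(n^{-1/2})$. Together these give $\max E_{B_R}\to 0$ uniformly in $R\le\pi/2$, certainly bounded. This suffices.

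If that estimate turns out delicate, a fallback is to strengthen the threshold. Assume by contradiction that $\Vol(T_\delta)<\frac12\Vol$ and use monotonicity of the Poisson solution in the domain (via the maximum principle) to compare with the hemisphere MET, which is explicitly bounded by a constant independent of $n$. This still contradicts the divergence in Corollary \ref{cor:met_divergence}.
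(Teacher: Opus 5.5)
Your proposal is correct and follows essentially the paper's route. It gets divergence of $\max E_{T_\delta(\Sigma^{n-1})}$ from Corollary~\ref{cor:met_divergence}, reaches a contradiction along a subsequence with $\Vol(T_\delta(\Sigma^{n-1}))<\frac12\Vol(\mathbb{S}^n)$ via the Talenti comparison and Proposition~\ref{maximums}, and then applies Theorem~\ref{measurecon} to $A=T_\delta(\Sigma^{n-1})$. Your explicit uniform bound on $\max E_{B_R}$ for $R\le\pi/2$ is actually needed, because the paper simply cites Theorem~\ref{thmmain}~ii), which concerns only the ball of radius $\pi/2-\delta$ and whose bound degenerates as the symmetrized radius approaches $\pi/2$; by domain monotonicity it reduces to the hemisphere, where the maximum is of order $\log n/n$ once the collar near $t=\pi/2$ is taken of width about $n^{-1/2}$, so that $\sin^{n-1}t$ stays bounded below there.
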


\begin{proof}\

Let us consider the family of closed and minimal isometric immersions $\mathcal{F}=\{\varphi_n:\Sigma^{n-1} \rightarrow \mathbb{S}^n\}_{n\in \ene}$, and let us assume that the minimal focal distance associated to the family $\mathbb{C}(\mathcal{F}):=inf_{n \in \ene} \{\mathbb{C}_n\} > 0$.  Let us consider now $\delta \leq \mathbb{C}(\mathcal{F})$.

The proof proceeds in three main steps.

\medskip

First, and by inequality  \eqref{ineq1} in Corollary \ref{cor:met_divergence}, since $\Sigma^{n-1} \subset \mathbb{S}^n$ is closed and minimal, the MET solution $E_{T_\delta(\Sigma^{n-1})}$ satisfies, for all $n \in \ene$,
\begin{equation}
    E_{T_\delta(\Sigma^{n-1})} \ge E_{T_\delta(\mathbb{S}^{n-1})}.
\end{equation}
Applying the order-preserving property of Schwarz symmetrization yields
\begin{equation}
    E_{T_\delta(\Sigma^{n-1})}^* \ge E_{T_\delta(\mathbb{S}^{n-1})}^* \quad \forall n \in \mathbb{N}.
\end{equation}
Taking the maximum on both sides, and applying Proposition \ref{maximums}, (because, as $\delta$ is less than the focal distance of $\Sigma$ in $\mathbb{S}^n(1)$, the tube $T_\delta(\Sigma^{n-1})$ is an open and bounded domain in $\mathbb{S}^n$ with smooth boundary $\delta T_\delta(\Sigma^{n-1})$), we obtain
\begin{equation} \label{eq:step1_div}
    \max E_{T_\delta(\Sigma^{n-1})}^* \ge \max E_{T_\delta(\mathbb{S}^{n-1})}^* = \max E_{T_\delta(\mathbb{S}^{n-1})} \longrightarrow \infty \quad \text{as } n \to \infty.
\end{equation}
Thus, the symmetrized MET sequence $\{ \max E_{T_\delta(\Sigma^{n-1})}^*\}_{n \in \ene}$ is a non-bounded sequence diverging to infinity, so it cannot contain infinite and convergent subsequences.


The second step of the proof consist in to get an estimation of the volume of the tube for large dimensions.  Given our family of closed and minimal isometric immersions $\mathcal{F}=\{\varphi_n:\Sigma^{n-1} \rightarrow \mathbb{S}^n\}_{n\in \ene}$, and given our radius $\delta \leq \mathbb{C}(\mathcal{F})$,  we claim that there exists $n_\delta \in \mathbb{N}$ such that for all $n \ge n_\delta$,
\begin{equation} \label{eq:step2_vol}
    \Vol(T_\delta(\Sigma^{n-1})) \ge \frac{1}{2} \Vol(\mathbb{S}^n).
\end{equation}
We establish \eqref{eq:step2_vol} by contradiction. Let us suppose that for all $n \in \ene$, there exists $m_n \geq n$ such that 
\begin{equation}\label{eq:step2_vol2}
\Vol(T_\delta(\Sigma^{{m_{n}}-1})) < \frac{1}{2} \Vol(\mathbb{S}^{m_{n}})
\end{equation} 
\noindent for all $n \in \mathbb{N}$.

Namely, inequality \eqref{eq:step2_vol2} holds along an unbounded subsequence of natural numbers $\{m_n\}_{n\in \ene}$. We are going to study the convergence of the subsequence $\{ \max E_{T_\delta(\Sigma^{m_{n}-1})}^*\}_{n \in \ene}$. 

Let $B_{r_{m_{n}}(\delta)}^{\mathbb{S}^{m_{n}}}$ be the symmetrization ball corresponding to $T_\delta(\Sigma^{m_{n}-1})$. Since $\Vol(T_\delta(\Sigma^{m_{n}-1})) < \frac{1}{2} \Vol(\mathbb{S}^{m_{n}})$, the radius $r_{m_{n}}(\delta)$ satisfies $r_{m_{n}}(\delta) < \pi/2$, meaning that $B_{r_{m_{n}}(\delta)}^{\mathbb{S}^{m_{n}}}$ is strictly contained in a polar hemisphere. By Corollary \ref{cor:upper_bound_met},
\begin{equation}
    E_{T_\delta(\Sigma^{m_{n}-1})}^* \le E_{B_{r_{m_{n}}(\delta)}^{\mathbb{S}^{m_{n}}}} \longrightarrow 0 \quad \text{as } n \to \infty.
\end{equation}
This implies that $\max E_{T_\delta(\Sigma^{m_{n}-1})}^* \to 0$ as $n \to \infty$, namely, that we have a convergent-to-zero subsequence $\{ \max E_{T_\delta(\Sigma^{m_{n}-1})}^*\}_{n \in \ene}$ which directly contradicts the divergence result \eqref{eq:step1_div} established in the first step of the proof. 

Therefore, there must exist $n_\delta \in \mathbb{N}$ such that $\Vol(T_\delta(\Sigma^{n-1})) \ge \frac{1}{2} \Vol(\mathbb{S}^n)$ for all $n \ge n_\delta$.


Finally, we apply the Measure Concentration Theorem. Set $A := T_\delta(\Sigma^{n-1}) \subset \mathbb{S}^n$. For $n \ge n_\delta$, the second step of the proof \eqref{eq:step2_vol} guarantees that $\Vol(A) \ge \frac{1}{2} \Vol(\mathbb{S}^n)$.

The $\epsilon$-fattening of $A$ is precisely the larger tubular neighborhood:
\begin{equation}
    A_\epsilon = \left\{ x \in \mathbb{S}^n : \text{dist}_{\mathbb{S}^n}(x, T_\delta(\Sigma^{n-1})) < \epsilon \right\} = T_{\delta+\epsilon}(\Sigma^{n-1}).
\end{equation}
Applying the classical Measure Concentration Theorem on the sphere (Theorem 1.1) to $A$, we obtain, for all $\epsilon \in (0, \pi/2 - \delta)$:
\begin{equation}
    \frac{\Vol(T_{\delta+\epsilon}(\Sigma^{n-1}))}{\Vol(\mathbb{S}^n)} \ge 1 - \sqrt{\frac{\pi}{8}} e^{-\epsilon^2 (n-1)/2}.
\end{equation}
Taking $n \to \infty$ for any radius $r = \delta + \epsilon \in (0, \pi/2)$, we conclude that
\begin{equation}
    \lim_{n \to \infty} \frac{\Vol(T_r(\Sigma^{n-1}))}{\Vol(\mathbb{S}^n)} = 1,
\end{equation}
which completes the proof.
\end{proof}


\begin{thebibliography}{99}

\bibitem{Bengtsson}
I. Bengtsson and K. {\.Z}yczkowski.
\newblock {\em Geometry of quantum states. An introduction to quantum entanglement}.
\newblock Cambridge University Press, Cambridge, 2006.



\bibitem{BessaGimenoPalmer2026}
G.~P. Bessa, V.~Gimeno, and V.~Palmer,
\emph{Mean exit time bounds and volume concentration on Riemannian manifolds},
Accepted in Trans. Amer. Math. Soc., 2026.

\bibitem{ColladayLangfordMcDonald2018}
D.~Colladay, L.~Langford, and P.~McDonald,
\emph{Comparison theorems for the Poisson equation on Riemannian manifolds},
J. Geom. Anal., 28 (2018), 3121--3142.

\bibitem{GP}
V.~Gimeno and V.~Palmer,
\emph{Fat equator effect and Minimality in immersions and submersions of the Sphere}, Trans. London. Math. Soc., DOI:10.1112/tlm3.70008, 2025, 1-23.


\bibitem{gray}A. Gray, {\em Tubes,} second ed. Progress in Mathematics, Birkh\"{a}user Verlag, Basel, \textbf{221}, (2004). With a preface by Vicente Miquel.

\bibitem{gray2}
A. Gray
\emph{Comparison theorems for the volumes of tubes as generalizations of the Weyl tube formula },Topology, 21, (1982), 201-228.


\bibitem{MilmanSchechtman2001}
V.~D. Milman and G.~Schechtman,
\emph{Asymptotic Theory of Finite Dimensional Normed Spaces},
Lecture Notes in Mathematics, vol. 1200, Springer-Verlag, Berlin, 2001.

\end{thebibliography}
\end{document}